\newtheorem{theorem}{Theorem}
\newtheorem{corollary}[theorem]{Corollary}
\newtheorem{proposition}[theorem]{Proposition}
\newtheorem{lemma}[theorem]{Lemma}
\newtheorem{definition}[theorem]{Definition}
\newtheorem{remark}[theorem]{Remark}
\newtheorem{theorem*}{Theorem}
\newtheorem{question*}[theorem*]{Question}
\newtheorem{conjecture*}[theorem*]{Conjecture}
\newtheorem{corollary*}[theorem*]{Corollary}
\newtheorem{theorem*e}{Teorema}
\newtheorem{question*e}[theorem*e]{Pregunta}
\newtheorem{conjecture*e}[theorem*e]{Conjetura}
\newtheorem{corollary*e}[theorem*e]{Corolario}
\renewcommand{\ker}{\mathrm{ker}}
\newcommand{\im}{\mathrm{im}}
\newcommand{\id}{\mathrm{id}}
\newcommand{\R}{\mathbb{R}}
\newcommand{\fcont}{\mathfrak{FCont}}
\newcommand{\Cont}{\mathfrak{Cont}}
\newcommand{\pr}{\mathrm{pr}}
\begin{document}

\title{Tight neighborhoods of contact submanifolds}

\author{Luis Hern\'{a}ndez--Corbato}
\address{Instituto de Ciencias Matematicas CSIC--UAM--UCM--UC3M, C. Nicol\'{a}s Cabrera, 13--15, 28049, Madrid, Spain}
\email{luishcorbato@mat.ucm.es}

\author{Luc\'{i}a Mart\'{i}n--Merch\'{a}n}
\address{Departamento de \'Algebra, Geometr\'{\i}a y Topolog\'{\i}a\\ Universidad Complutense de Madrid \\ Plaza de Ciencias 3 \\ 28040 Madrid \\ Spain}
\email{lmmerchan@ucm.es}

\author{Francisco Presas}
\address{Instituto de Ciencias Matematicas CSIC--UAM--UCM--UC3M, C. Nicol\'{a}s Cabrera, 13--15, 28049, Madrid, Spain}
\email{fpresas@icmat.es}

\keywords{Contactomorphism, overtwisted, orderable, Hamiltonian.}

\subjclass[2010]{Primary 37J10. Secondary: 37C40, 37J55.}

\begin{abstract}
We prove that any small enough neighborhood of a closed contact submanifold is always tight under a mild assumption on its normal bundle. The non-existence of $C^0$--small positive loops of contactomorphisms in general overtwisted manifolds is shown as a corollary.
\end{abstract}

\maketitle

\section{Introduction}

A contact manifold $(M, \xi)$ is an $(2n+1)$--dimensional manifold equipped with a maximally non--integrable codimension 1 distribution $\xi \subset TM$. If we assume that $\xi$ is coorientable, as will be the case in the article, the hyperplane distribution can be written as the kernel of a global 1--form $\alpha$, $\xi = \ker(\alpha)$, and the maximal non--integrable condition reads as $\alpha \wedge (d\alpha)^{n} \neq 0$. These conditions imply that $(\xi, d\alpha)$ is a symplectic vector bundle over $M$. However, a contact structure on $M$ cannot be directly recovered from a hyperplane distribution $\xi$ and a symplectic structure $\omega$ on the fibers. The formal data $(\xi, \omega)$ is called formal contact structure.

Let $\Cont(M)$ and $\fcont(M)$ denote the set of contact and formal contact structures, respectively. Gromov proved that if $M$ is open the natural inclusion is a homotopy equivalence. The statement does not readily extend to closed manifolds. In dimension 3, Eliashberg introduced a subclass $\Cont_{OT}(M)$ of $\Cont(M)$, the so--called overtwisted contact structures, and proved that any formal contact homotopy class contains a unique, up to isotopy, overtwisted contact structure. Recently, this result has been extended to arbitrary dimension in \cite{BEM} so the notion of overtwisted contact structure has been settled in general.

Prior to \cite{BEM}, different proposals for the definition of the overtwisting phenomenum appeared in the literature. The plastikstufe, introduced in \cite{niederkruger}, resembled the overtwisted disk in the sense that it provides an obstruction to symplectic fillability. The presence of a plastikstufe has been shown to be equivalent to the contact structure being overtwisted (check \cite[Theorem 1.1]{CMP} and \cite{huang} for a list of disguises of an overtwisted structure). One of the corollaries obtained in \cite{CMP} is a stability property for overtwisted structures: if $(M, \ker\, \alpha)$ is overtwisted then $(M \times \mathbb{D}^2(R), \ker(\alpha + r^2 d\theta))$ is also overtwisted provided $R > 0$ is large enough, where $\mathbb{D}^2(R)$ denotes the open 2--disk of radius $R$ and $r^2d\theta$ denotes the standard radial Liouville form in $\mathbb{R}^2$.

\subsection{Statements of the results}

This paper explores the other end of the previous discussion, can small neighborhoods of contact submanifolds be overtwisted? We provide a negative answer to the question in several instances. The main result presented in the article is the following:

\begin{theorem}\label{thm:MD2tight}
Let $(M, \ker \, \alpha)$ be a contact manifold. Then there exists $\varepsilon > 0$ such that $(M \times \mathbb{D}^2(\varepsilon), \ker(\alpha + r^2\, d\theta))$ is tight.
\end{theorem}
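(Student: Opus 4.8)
The plan is to establish tightness by ruling out an overtwisted disk (equivalently, by \cite{CMP}, a plastikstufe) inside $M\times\mathbb{D}^2(\varepsilon)$ once $\varepsilon$ is small. Note first that the germ of $M\times\{0\}$ is the standard neighborhood of $M$ as a codimension--$2$ contact submanifold with trivial conformal symplectic normal bundle, so the content of the statement is a genuine size threshold: by the stability property quoted from \cite{CMP}, the same manifold is overtwisted for large radius, and any proof must therefore exploit $\varepsilon$ being small rather than the mere contactomorphism type of the germ. I will use the smallness of $\varepsilon$ to confine holomorphic curves in the normal $\C$--direction.

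The geometric backbone is the following exact model. On $X=\R_t\times M\times\C$ put $\Lambda=e^t\alpha+\tfrac12 r^2\,d\theta$; then $d\Lambda$ is the product of the symplectization form $d(e^t\alpha)$ on $\R_t\times M$ with the standard area form on $\C$, hence symplectic, and its Liouville field is $Z=\partial_t+\tfrac r2\partial_r$. The slice $\{t=0\}=M\times\C$ is transverse to $Z$ and inherits the contact form $\alpha+\tfrac12 r^2\,d\theta$, so after rescaling $r$ our manifold is exactly the contact--type hypersurface $M\times\mathbb{D}^2(\varepsilon)\subset M\times\C\subset X$. The key observation is that the normal radial function $r^2$, pulled back from $\C$, is plurisubharmonic for an almost complex structure that is adapted to the contact form and split along the $\C$--factor; crucially, the splitting of the contact distribution into its $M$-- and $\C$--parts is only approximate because the term $r^2\,d\theta$ tilts $\ker(\alpha+r^2\,d\theta)$, and this tilt (the very mechanism producing overtwistedness at large radius in \cite{CMP}) is controlled precisely when $r\le\varepsilon$ is small. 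Here the inflation Lemma~\ref{lem:contactinflating} is used to normalize $\alpha$ and the folding construction of Proposition~\ref{prop:folding} to extend the adapted $J$ and the function $r^2$ slightly beyond $r=\varepsilon$, so that a maximum principle is available on a genuine collar.

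With this in place I would argue by contradiction. A plastikstufe in $M\times\mathbb{D}^2(\varepsilon)$ produces, by Niederkr\"uger's Bishop family \cite{niederkruger}, a non--compact one--parameter moduli space of finite--energy holomorphic disks in the symplectization, with a single boundary point given by the constant disks along the core. Choosing $J$ as above, the composition of $r^2$ with any such disk is subharmonic, so by the maximum principle it attains its maximum on the boundary, which lies on the plastikstufe and hence in $\{r\le\varepsilon\}$; consequently every disk of the family stays in $\{r\le\varepsilon\}$ and cannot escape through the open normal boundary. This confinement supplies the a priori energy bound (the $\C$--area is at most $\pi\varepsilon^2$) that, together with the usual exclusion of breaking and bubbling in the symplectization, makes the moduli space a compact one--manifold with a single boundary point --- an impossibility. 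Hence no plastikstufe exists and $M\times\mathbb{D}^2(\varepsilon)$ is tight.

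The main obstacle I anticipate is analytic rather than formal: making the maximum principle rigorous, i.e. constructing a single almost complex structure on the symplectization that is simultaneously adapted to $\alpha+r^2\,d\theta$ and keeps $r^2$ strictly plurisubharmonic over all of $M\times\mathbb{D}^2(\varepsilon)$. This is exactly the step where smallness of $\varepsilon$ must be quantified against $d\alpha$, since the cross terms coming from $r^2\,d\theta$ degrade convexity as $r$ grows; the inflation and folding results are what let me fix the threshold $\varepsilon$ uniformly and run the compactness argument on the resulting collar despite $M\times\mathbb{D}^2(\varepsilon)$ being open in the normal direction. Since $M$ is closed, no further non--compactness enters.
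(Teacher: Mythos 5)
Your proposal takes a route entirely different from the paper's --- the paper embeds $M$ into the boundary of a Stein domain with trivial normal bundle via an $h$--principle, stabilizes using the result of \cite{CMP} that overtwistedness survives products with large disks, and then invokes Guth's polydisk packing theorem to reduce the single--disk case to the high--codimension one --- and your route has a genuine gap at its final step. Niederkr\"uger's Bishop--family argument derives a contradiction from a plastikstufe only in the presence of a (semi--positive or exact) symplectic \emph{filling}: the filling caps off the negative end, and it is this that forces the moduli space of holomorphic disks to be a compact one--manifold with a single boundary point. Run in the symplectization $\R\times(M\times\mathbb{D}^2(\varepsilon))$, as you propose, the argument yields no contradiction: the family of disks can degenerate by breaking along Reeb orbits and escaping to the concave end $t\to-\infty$, and there is no ``usual exclusion of breaking'' there. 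Indeed such breaking is precisely the mechanism of Hofer's proof of the Weinstein conjecture for overtwisted $3$--manifolds, and it \emph{must} be possible in general, since plastikstufes do exist in non--fillable contact manifolds and your argument, if valid, would apply verbatim to a small neighborhood of the core of any of them. Your maximum principle in the normal $\C$--direction only confines the disks in the $r$--coordinate; it does nothing to prevent non--compactness in the symplectization direction.

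The missing idea is therefore exactly the content of the paper's Sections 2 and 3: one must first embed $M\times\mathbb{D}^2(\varepsilon)$ into a fillable closed contact manifold before any filling obstruction (plastikstufe, overtwisted disk, GPS) can be brought to bear. This is where the real work lies, because the natural construction (trivializing $\xi^*$ by adding a complementary bundle $\tau$ of rank $2k$ and realizing $M$ inside the boundary of a Stein domain) forces high codimension, proving tightness only of $M\times P^{2m}(\varepsilon)$ for $m\ge k$; descending to a single $\mathbb{D}^2(\varepsilon)$ factor then requires the contradiction argument combining the stability theorem of \cite{CMP} with the packing embedding of \cite{guth}. A smaller but real defect: the ``inflation'' and ``folding'' results you invoke to control the tilt of $\ker(\alpha+r^2\,d\theta)$ for small $r$ are referenced nowhere in the body of the paper, so the quantitative plurisubharmonicity claim your maximum principle rests on is not established anywhere either.
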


This theorem was previously obtained by Gironella \cite[Corollary H]{gironella} in the case of 3--manifolds with a completely different approach. An interesting consequence is stated in the next corollary:


\begin{corollary}
Given any overtwisted contact manifold $(M, \alpha)$, there exists a radius $R_0 \in \R^+ \setminus \{0\}$ such that $(M \times \mathbb{D}^2(R), \alpha + r^2 \, d\theta)$ is tight if $R \in (0, R_0)$ and is overtwisted if $R > R_0$.
\end{corollary}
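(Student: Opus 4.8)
The plan is to run a supremum argument on the radius, taking Theorem \ref{thm:MD2tight} as the lower input and the stability result of \cite{CMP} quoted above as the upper input, and gluing them together with the monotonicity of tightness under inclusions.

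First I would record that monotonicity. For $0 < R < R'$ the obvious inclusion $M \times \mathbb{D}^2(R) \hookrightarrow M \times \mathbb{D}^2(R')$ pulls back $\alpha + r^2\, d\theta$ to itself, so it is a strict contact embedding onto an open subset. Since an open subset of a tight contact manifold is tight — an overtwisted disk contained in the subset would also sit inside the ambient manifold — tightness of $(M \times \mathbb{D}^2(R'), \ker(\alpha + r^2\, d\theta))$ forces tightness of $(M \times \mathbb{D}^2(R), \ker(\alpha + r^2\, d\theta))$. Equivalently, overtwistedness propagates to larger radii.

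Next I would set
\[
S = \{\, R > 0 : (M \times \mathbb{D}^2(R),\, \ker(\alpha + r^2\, d\theta)) \text{ is tight} \,\}
\]
and put $R_0 = \sup S$. Theorem \ref{thm:MD2tight} provides some $\varepsilon > 0$ with $\varepsilon \in S$, so $R_0 \geq \varepsilon > 0$, while the stability statement of \cite{CMP} — overtwistedness of the factor $(M, \ker \alpha)$ persists after stabilizing by a disk of large enough radius — shows that all sufficiently large radii lie outside $S$, whence $R_0 < \infty$. By the monotonicity above, $S$ is downward closed, so $(0, R_0) \subseteq S$: every $R < R_0$ is dominated by some $R' \in S$ with $R < R'$ and is therefore tight. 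Finally, for $R > R_0$ we have $R \notin S$, and since a contact structure is tight precisely when it is not overtwisted, $(M \times \mathbb{D}^2(R), \ker(\alpha + r^2\, d\theta))$ is overtwisted. This produces the stated dichotomy with threshold $R_0 \in \R^+ \setminus \{0\}$.

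There is no serious obstacle once Theorem \ref{thm:MD2tight} is available: the argument is an elementary sandwich between the two cited inputs. The only points requiring a line of care are the monotonicity of tightness under the open inclusions — which rests on the fact that the model form $\alpha + r^2\, d\theta$ is literally the same on every $M \times \mathbb{D}^2(R)$, so the inclusions are honest strict contact embeddings — and the invocation of the tight/overtwisted dichotomy to upgrade ``not tight'' to ``overtwisted'' for $R > R_0$. The behavior exactly at $R = R_0$ is left undetermined, which is consistent with the statement quantifying only over $R \in (0, R_0)$ and $R > R_0$.
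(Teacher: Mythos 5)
Your argument is correct and is precisely the one the paper intends (the corollary is stated without an explicit proof, as an immediate consequence of Theorem \ref{thm:MD2tight} together with the stability result of \cite{CMP} quoted in the introduction): take $R_0$ to be the supremum of the tight radii, bounded below by Theorem \ref{thm:MD2tight} and above by \cite{CMP}, and use that tightness passes to open subsets. No issues.
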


Note that a similar statement was already proven in \cite{niederkrugerpresas} but in the case of GPS--overtwisted.

Theorem \ref{thm:MD2tight} can be extended to arbitrary neighborhoods of codimension 2 contact submanifolds $M$ whose normal bundle has a nowhere vanishing section:

\begin{theorem}\label{thm:nbdsubmanifoldtight}
Suppose $M$ is a contact submanifold of the contact manifold $(N, \xi)$. Assume that the normal bundle of $M$ has a nowhere vanishing section. Then, there is a neighborhood of $M$ in $N$ that is tight.
\end{theorem}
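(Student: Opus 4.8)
The plan is to reduce Theorem \ref{thm:nbdsubmanifoldtight} to Theorem \ref{thm:MD2tight} by means of the standard neighborhood theorem for contact submanifolds. Since $M$ has codimension $2$ (the relevant case here), its normal bundle $\nu M$ is a rank--$2$ real bundle carrying a conformal symplectic structure induced by $(\xi, d\alpha)$; equivalently, it is a complex line bundle. The germ of the contact structure $\xi$ along $M$ is determined, up to contactomorphism, by the contact structure induced on $M$ together with this conformal symplectic normal bundle. Thus the first step is to write down an explicit model for a neighborhood of $M$ and to recognize it as a sub--neighborhood of the product model $\ker(\alpha + r^2\, d\theta)$ appearing in Theorem \ref{thm:MD2tight}.

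The hypothesis on the normal bundle is used precisely here. A nowhere vanishing section $s$ of $\nu M$ trivializes the complex line bundle: the assignment $(p, z) \mapsto z \cdot s(p)$ identifies $M \times \C$ with $\nu M$ using the complex structure on the fibers. Consequently the conformal symplectic normal bundle is the trivial one, and the neighborhood theorem identifies a neighborhood of $M$ in $N$ with a neighborhood of the zero section $M \times \{0\}$ in $(M \times \R^2, \ker(\alpha + r^2\, d\theta))$, after possibly rescaling the fiber coordinates (a contactomorphism onto its image, realized by a fiberwise dilation $r \mapsto \lambda r$).

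With this identification in hand, one chooses the neighborhood small enough that it embeds into $M \times \mathbb{D}^2(\varepsilon)$, with $\varepsilon$ the radius furnished by Theorem \ref{thm:MD2tight}. That theorem guarantees $(M \times \mathbb{D}^2(\varepsilon), \ker(\alpha + r^2\, d\theta))$ is tight. Since tightness is a contact invariant and passes to open subsets --- an overtwisted disk contained in a subset is in particular an overtwisted disk of the ambient manifold, so if the subset were overtwisted the ambient one would be too --- the corresponding neighborhood of $M$ in $N$ is tight, which is the desired conclusion.

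The main obstacle I expect is the bookkeeping inside the neighborhood theorem: one must verify that the model produced is not merely contactomorphic to some neighborhood of the zero section in $\nu M$, but can be normalized so that the contact form becomes exactly $\alpha + r^2\, d\theta$ on the trivialized bundle, so that Theorem \ref{thm:MD2tight} applies verbatim. This requires choosing a compatible complex structure on the fibers and absorbing the conformal factor relating the symplectic form on $\nu M$ to the standard area form $r\, dr \wedge d\theta$ through the fiberwise rescaling above, while checking this rescaling is a contactomorphism onto its image. The nowhere vanishing section is exactly what promotes these choices from local to global, and hence what makes the whole reduction go through.
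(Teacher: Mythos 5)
Your argument is correct, but only in the case where $M$ has codimension $2$ in $N$ --- and there the hypothesis of a nowhere vanishing section is simply equivalent to triviality of the normal bundle, so the statement reduces immediately to Theorem \ref{thm:MD2tight}, exactly as you describe. The theorem as stated (and as proved in the paper) places no restriction on the codimension: the whole point of the hypothesis is that for a complex normal bundle $E \to M$ of complex rank greater than one, a nowhere vanishing section does \emph{not} trivialize $E$; it only splits off a trivial complex line, $E = F \oplus \underline{\C}$. Your step ``the assignment $(p,z)\mapsto z\cdot s(p)$ identifies $M\times\C$ with $\nu M$'' is exactly where the argument silently specializes to codimension $2$, and the general case escapes it.

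In higher codimension the standard neighborhood theorem only models a neighborhood of $M$ in $N$ on $U \times \mathbb{D}^2(\varepsilon)$, where $U$ is a neighborhood of the zero section $F_0$ of $F$ carrying a contact form $\alpha' + r^2\,d\theta$. One would like to apply Theorem \ref{thm:MD2tight} with $(U,\ker\alpha')$ in place of $(M,\ker\alpha)$, but $U$ is an open manifold, while the chain of results behind Theorem \ref{thm:MD2tight} (in particular Theorem \ref{thm:MD2ktight} and the Stein/Weinstein constructions) requires a closed contact manifold. The missing ingredient is a compactification: the paper embeds $F$ into the closed manifold $Q=\mathbb{P}(F\oplus\C)$, observes that $Q$ carries a natural formal contact structure which is a genuine contact structure near $F_0$ (Lemma \ref{lem:aroundzeroiscontact}), and invokes the relative $h$--principle of \cite{BEM} to obtain a genuine contact structure on all of $Q$ agreeing with $\ker\alpha'$ near $F_0$; Theorem \ref{thm:MD2tight} applied to this closed manifold then yields tightness of $U\times\mathbb{D}^2(\varepsilon)$ and hence of the neighborhood of $M$. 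Without this step, or some substitute for it, your reduction does not prove the theorem in the generality in which it is stated.
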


The proof of Theorems \ref{thm:MD2tight} and \ref{thm:nbdsubmanifoldtight} is based on Theorem \ref{thm:MD2ktight}. That theorem states that for $m$ large enough $(M \times P^{2m}(\varepsilon), \ker(\alpha + \sum_{i = 1}^m r^2_i\,d\theta_i))$ admits a contact embedding in a closed contact manifold of the same dimension that is Stein fillable, therefore $M \times P^{2m}(\varepsilon)$ is obviously tight. However, Theorems \ref{thm:MD2tight} and \ref{thm:nbdsubmanifoldtight} do not prove such a strong result. Their proof uses \cite[Theorem 1.1.(ii)]{CMP} and some packing lemmas to obtain a contradiction by stabilizing and reducing to Theorem \ref{thm:MD2ktight}.

\subsection{Applications}

\subsubsection{Remarks about contact submanifolds.} We are assuming a choice of contact forms whenever the measure of a radius of the tubular neighborhoods of a contact submanifold is required.

\smallskip

{\bf 1.}  Assume that $(M, \xi)$ contact embeds into an overtwisted contact manifold $(N, \xi_{OT})$ as a codimension $2$ submanifold with trivial normal bundle. By Theorem \ref{thm:MD2tight}, it is clear that the overtwisted disk cannot be localized on arbitrary small neighborhoods of $M$, even assuming that $M$ itself is overtwisted. This stands in sharp contrast with \cite{huang} and \cite{CMP} in which it is shown that the overtwisted disk can be localized around a very special kind of codimension $n$ submanifold: a plastikstufe \cite{niederkruger}.

{\bf 2.} Assume now that $(M, \xi_{OT})$ is overtwisted and contact embeds into a tight contact manifold $(N, \xi)$ as codimension $2$ submanifold with trivial normal bundle. Then we can perform a fibered connected sum of $(N, \xi)$ with itself along $(M, \xi_{OT})$. The gluing region is $M \times (-\varepsilon,\varepsilon) \times \mathbb S^1$, for some $\varepsilon>0$, and coordinates $(p,t, \theta)$ can be chosen such that the glued contact structure admits an associated contact form $\alpha= \alpha_{OT} +t\,d\theta$.

It is clear that the contact connection associated to the contact fibration
$M \times (-\varepsilon,\varepsilon) \times \mathbb S^1 \to (-\varepsilon,\varepsilon) \times \mathbb S^1$ \cite{presas2007class} induces the identity when we lift by parallel transport the loop $\{ 0 \} \times \mathbb S^1$. The parallel transport of an overtwisted disk of the fiber induces a plastikstufe, see \cite{presas2007class} for more details. By \cite{huang}, the manifold is overtwisted.
 
Call $R_M>0$ the biggest radius for which $M \times \mathbb D^2(R_M)$ contact embeds in $N$. The connected sum $N \#_M N$ readily increases the biggest radius to be $R_{N\#_M N} \geq \sqrt{2}R_M$: the annulus has twice the area of the original disk and therefore you can embed a disk of radius $\sqrt{2}R_M$. However we get much more, since we actually obtain $R_{N\#_M N}=\infty$. This is because we can always formally contact embed $M \times \R^2$ into $N \#_M N$. Moreover, we can assume that the embedding restricted to a very small neighborhood $U$ of the fiber $M \times \{ 0 \}$ provides a honest fibered contact embedding into $M \times (-\varepsilon,\varepsilon) \times \mathbb S^1$. Indeed, applying \cite[Corollary 1.4]{BEM} relative to the domain $U$ we obtain a contact embedding of $M \times \R^2$ thanks to the fact that $N \#_M N$ is overtwisted. This just means that the contact embedding of the tubular neighborhood can be really sophisticated and its explicit construction is far from obvious.


\subsubsection{Small loops of contactomorphisms.} 
Theorem \ref{thm:MD2tight} allows to extend the result of non--existence of small positive loops of contactomorphisms in overtwisted 3--manifolds contained in \cite{CPS} to arbitrary dimension. A loop of contactomorphisms or, more generally, a contact isotopy is said to be positive if it moves every point in a direction positively transverse to the contact distribution. The notion of positivity induces for certain manifolds, called orderable, a partial order on the universal cover of the contactomorphism group and it is related with non--squeezing and rigidity in contact geometry, see \cite{ek, ekp}. As explained in \cite{ek}, orderability is equivalent to the non--existence of a positive contractible loop of contactomorphisms.

Any contact isotopy is generated by a contact Hamiltonian $H_t \colon M \to \R$ that takes only positive values in case the isotopy is positive. The main result of \cite{CPS} states that if $(M, \ker\,\alpha)$ is an overtwisted 3--manifold there exists a constant $C(\alpha)$ such that any positive loop of contactomorphisms generated by a Hamiltonian $H \colon M \times S^1 \to \R^+$ satisfies $||H||_{\mathcal{C}^0} \ge C(\alpha)$. The result has been recently extended to arbitrary hypertight or Liouville (exact symplectically) fillable contact manifolds in \cite{albersmerry}. As a consequence of Theorem \ref{thm:MD2tight}, we can eliminate the restriction on the dimension in the overtwisted case:

\begin{theorem}\label{thm:nosmallpositiveloops}
Let $(M, \ker \,\alpha)$ be an overtwisted contact manifold. There exists a constant $C(\alpha)$ such that the norm of a Hamiltonian $H \colon M \times S^1 \to \R^+$ that generates a positive loop $\{\phi_{\theta}\}$ of contactomorphisms on $M$ satisfies
$$||H||_{\mathcal{C}^0} \ge C(\alpha)$$
\end{theorem}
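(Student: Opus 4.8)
The plan is to argue by contradiction, reducing the existence of a small positive loop to the overtwistedness of a thin neighborhood of the form $M \times \mathbb{D}^2(\varepsilon)$, which is forbidden by Theorem \ref{thm:MD2tight}. This is the higher--dimensional analogue of the strategy of \cite{CPS}: the only place where that argument used the hypothesis $\dim M = 3$ was to certify the tightness of the model neighborhood, and this is now available in every dimension thanks to Theorem \ref{thm:MD2tight}. Concretely, I would fix $\varepsilon > 0$ so that $(M \times \mathbb{D}^2(\varepsilon), \ker(\alpha + r^2\, d\theta))$ is tight, and suppose for contradiction that a positive loop $\{\phi_\theta\}$ is generated by a Hamiltonian $H \colon M \times S^1 \to \R^+$ whose norm $||H||_{\mathcal{C}^0}$ is below a threshold $C(\alpha)$, to be fixed later and comparable to $\varepsilon^2$.

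The bridge between the loop and the neighborhood is a suspension of the loop over the disk factor. Writing $\phi_\theta^*\alpha = e^{g_\theta}\alpha$ and recalling that the contact vector field $X_{H_\theta}$ generating $\{\phi_\theta\}$ satisfies $\alpha(X_{H_\theta}) = H_\theta$, the spun diffeomorphism $\Phi(p, r, \theta) = (\phi_\theta(p), r, \theta)$ of $M \times (\mathbb{D}^2(\varepsilon) \setminus \{0\})$ verifies
$$\Phi^*(\alpha + r^2\, d\theta) = e^{g_\theta}\alpha + (r^2 + H_\theta)\, d\theta.$$
Hence the loop inserts the positive summand $H_\theta$ into the coefficient of $d\theta$; after dividing by $e^{g_\theta}$ this is a shift of the symplectic radial coordinate $r^2$ by the positive amount controlled by $H$. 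I would exploit this shift to transport the overtwisted behavior of the fiber into the neighborhood: since $\{\phi_\theta\}$ is a genuine loop, the overtwisted disk of the central fiber $M = M \times \{0\}$ is carried by the suspension around the $S^1$ factor and closes up, producing a plastikstufe exactly as in the parallel--transport construction of \cite{presas2007class}. Positivity of $H$ is what fixes the sign needed for the characteristic foliation of the resulting plastikstufe, while the bound $||H||_{\mathcal{C}^0} < C(\alpha)$ is what keeps the entire construction inside radius $\varepsilon$.

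Granting this, the conclusion is immediate: by \cite[Theorem 1.1]{CMP} (equivalently \cite{huang}) a plastikstufe forces $(M \times \mathbb{D}^2(\varepsilon), \ker(\alpha + r^2\, d\theta))$ to be overtwisted, hence not tight, contradicting Theorem \ref{thm:MD2tight}. Therefore no positive loop can satisfy $||H||_{\mathcal{C}^0} < C(\alpha)$, which is the desired lower bound.

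I expect the main obstacle to be the quantitative bookkeeping that upgrades the qualitative radial shift above into an honest contact embedding. The conformal factor $e^{g_\theta}$ and the Hamiltonian $H_\theta$ both depend on the fiber point $p$ and on $\theta$, so the rescaled coordinate $e^{-g_\theta}(r^2 + H_\theta)$ does not cut out a product neighborhood, and the region it determines must be packed carefully inside radius $\varepsilon$; this is also where one must handle the non--smoothness of $\Phi$ across the core $r = 0$. Pinning down the explicit threshold $C(\alpha)$ in terms of $\varepsilon$ and verifying that the transported disk genuinely meets the characteristic--foliation hypotheses of a plastikstufe form the technical heart of the proof. It is precisely at this step that Theorem \ref{thm:MD2tight} substitutes for the three--dimensional input of \cite{CPS} and \cite{ekp}, rendering the whole argument dimension--independent.
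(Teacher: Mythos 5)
Your proposal follows the paper's proof essentially verbatim: first the tightness of $(M \times \mathbb{D}^2(\varepsilon), \ker(\alpha + r^2\,d\theta))$ supplied by Theorem \ref{thm:MD2tight}, then the suspension/parallel--transport argument of \cite[Proposition 9]{CPS} to plant a plastikstufe inside that tight neighborhood whenever the generating Hamiltonian is too small, yielding the contradiction via \cite{CMP, huang}. The one adjustment the paper makes for arbitrary dimension --- worth noting since you speak of transporting ``the overtwisted disk'' --- is that the object carried around the $S^1$ factor is a plastikstufe in $M$ (whose existence is equivalent to overtwistedness by \cite{huang}), which is what makes the dimensions of the resulting plastikstufe in $M \times \mathbb{D}^2(\varepsilon)$ come out right when $\dim M > 3$.
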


The strategy of the proof copies that of \cite{CPS}. The first step is to prove that $M \times \mathbb D^2 (\varepsilon)$ is tight, this is provided by Theorem \ref{thm:MD2tight}. The second step shows that a small positive loop provides a way to lift a plastikstufe in $M$   (whose existence is equivalent to overtwistedness as discussed above \cite{huang}) to a plastikstufe in $M \times \mathbb D^2 (\varepsilon)$. This is exactly Proposition 9 in \cite{CPS}. This provides a contradiction that forbids the existence of the small positive loop.

It is worth mentioning that the argument forbids the existence of (possibly non--contractible) small positive loops. This is in contrast with \cite{albersmerry} and the work in progress by S. Sandon \cite{sandon2016} in which they need to add the contractibility hypothesis in order to conclude.

\begin{remark}
The hypothesis in Theorem \ref{thm:nosmallpositiveloops} can be changed by the probably weaker notion of $GPS$-overtwisted, see  
\cite{niederkrugerpresas}. Indeed, assume that the manifold $(M, \xi)$ is $GPS$-overtwisted. This means that there is an immersed $GPS$ in the manifold. The positive loop produce a $GPS$ in $M\times \mathbb D^2 (\varepsilon)$ by parallel transport of the $GPS$ around a closed loop in the base $D^2 (\varepsilon)$. In this case, we need to iterate the process $k$ times to produce a $GPS$ in $M\times P^{2k} (\varepsilon)$. Now, Theorem \ref{thm:MD2ktight} concludes that this manifold embeds into a Stein fillable one providing a contradiction with the main result in \cite{niederkrugerpresas}.
\end{remark}

\section*{Acknowledgements}
The authors express their gratitude to Roger Casals for the useful conversations around this article. The authors have been supported by the Spanish Research Projects SEV-2015-0554, MTM2015-63612-P, MTM2015-72876-EXP and MTM2016-79400-P. The second author was supported during the development of the article by a Master grant from ICMAT through the Severo Ochoa program.

\section{$M \times P^{2m}(\varepsilon)$ admits a Stein fillable smooth compactification}\label{sec:MD2k}

\subsection{Construction of a formal contact embedding $M \to \partial W$ with trivial normal bundle}

Recall that $(\xi, d\alpha)$ defines a symplectic vector bundle over $M$, thus it is equipped with a complex bundle structure unique up to homotopy. Denote $\xi^*$ the dual complex vector bundle of $\xi$.
A standard result on the theory of vector bundles guarantees the existence of a complex vector bundle $\tau \to M$ such that $\xi^* \oplus \tau \to M$ is trivial, that is, there is an isomorphism of complex vector bundles over $M$ between $\xi^* \oplus \tau$ and $M \times \mathbb{C}^k = \underline{\mathbb{C}}^k$, where $k$ is a positive integer large enough.

Denote $\pi\colon T^*M \to M$ the cotangent bundle projection and denote $\pr \colon \pi^*\tau \to T^*M$ the bundle projection. Define $\tilde{\pi} = \pi \circ \pr$.
Let us understand $\widehat{W} = \pi^*\tau$ as a smooth almost complex manifold. 
Choosing a $\xi$--compatible contact form $\alpha$, i.e $\xi = \ker\,\alpha$, it is clear that
\begin{align*}
T\widehat{W} &\cong\tilde{\pi}^*\tau \oplus \pr^*T(T^*M) \cong \tilde{\pi}^*\tau \oplus \tilde{\pi}^*T^*M \oplus \tilde{\pi}^*TM \cong \tilde{\pi}^*\tau \oplus \tilde{\pi}^*(\xi^* \oplus \langle \alpha \rangle) \oplus \tilde{\pi}^*TM \\
&\cong \tilde{\pi}^*(\tau \oplus \xi^*) \oplus \tilde{\pi}^* \langle \alpha \rangle \oplus \tilde{\pi}^*TM \cong \tilde{\pi}^*\underline{\mathbb{C}}^k \oplus \tilde{\pi}^* \langle \alpha \rangle \oplus \tilde{\pi}^*TM  \\
\end{align*}
In particular, the vector bundle $\pi^*\tau  \stackrel{\tilde \pi}{\to} M$ is isomorphic to
$\underline{\mathbb{C}}^k \oplus \langle \alpha \rangle$.
Fix a direct sum bundle metric $h$ in $\pi^*\tau$ such that $h(\alpha, \alpha) = 1$. Now define 

\centerline{$W = \{(v, p) \in \widehat{W}: h(v, v) \le 1\}$.}

Given a complex structure $j$ in $\xi$ compatible with $d\alpha$, we can extend it to a complex structure on $T^*M$ and by a direct sum with a complex structure in $\tau$ we obtain a complex structure $J$ in $T\widehat{W}$.
Then, $(W, J)$ is an almost complex manifold with boundary $\partial W$ that has a natural formal contact structure $\xi_0 = T\partial W \cap J(T \partial W)$. Consider the embedding

\centerline{$e_0 \colon M \to \partial W = \mathbb{S}(\underline{\mathbb{C}}^k \oplus \langle\alpha\rangle): p \mapsto (0,1)$.}

\noindent
We claim that its normal bundle is trivial because it is equal to $\tilde{\pi}^* \underline{\mathbb{C}}^k$. The reason is that the normal bundle to a section of a vector bundle is the restriction of the vertical bundle to the section.
In our case the restriction of the vertical bundle $T(\mathbb S(\underline{\mathbb{C}}^k) \oplus  \langle\alpha\rangle))$ to the image of $e_0$ is clearly $(\tilde \pi^* \underline{\mathbb{C}}^k)_{|im(e_0)}$.


\subsection{$W$ is Stein fillable and $\partial W$ is contact}

The distribution $T \partial W \cap J (T \partial W)$ is not necessarily a contact structure in $\partial W$. However, we will deform this distribution to a genuine contact structure using the following result.

\begin{theorem}[Eliashberg \cite{eliashberg90}]\label{thm:almostcomplex}
Let $(V^{2n}, J)$ be an almost complex manifold with boundary of dimension $2n > 4$ and suppose that $f \colon V \to [0,1]$ is a Morse function constant on $\partial V$ such that $\mathrm{ind}_p(f) \le n$ for every $p \in \mathrm{Crit}(f)$. Then, there exists a homotopy of almost complex structures $\{J_t\}_{t = 0}^1$ such that $J_0 = J$, $J_1$ is integrable and $f$ is $J_1$--convex.
\end{theorem}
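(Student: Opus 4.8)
The plan is to prove the statement by induction on the handles of a handle decomposition of $V$ determined by $f$, attaching one handle at a time while extending an integrable, $f$-convex complex structure across each of them. First I would use the gradient flow of $f$ with respect to an auxiliary metric to present $V$ as built from $\partial V \times [0,\varepsilon)$ by successively attaching handles $H_k$ of index $k \le n$, ordered by increasing critical value. The inductive hypothesis is that on the sublevel set $V_c = f^{-1}([0,c])$ lying just below a critical value $c_0$ we already have a complex structure, homotopic to the original $J$ and integrable near $\partial V_c$, for which $f$ is convex; the regular level $\Sigma_c = f^{-1}(c)$ then inherits the contact structure $\xi = T\Sigma_c \cap J(T\Sigma_c)$. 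The goal of the inductive step is to push this convex structure across the next critical point.

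Each handle is attached along its attaching sphere $S^{k-1} \subset \Sigma_c$, and the key geometric requirement is that this sphere be \emph{isotropic} for $\xi$, so that a neighborhood of it becomes the attaching region of a standard Weinstein handle carrying an explicit integrable, convex model. Hence the heart of the step is to isotope the attaching sphere to an isotropic (for $k<n$) or Legendrian (for $k=n$) embedding. For the \emph{subcritical} handles, $k-1 < n-1$, the attaching sphere has dimension strictly below the maximal isotropic dimension, so Gromov's h-principle for isotropic embeddings applies: the relevant formal data is unobstructed, the sphere can be made isotropic within its isotopy class, the standard subcritical handle is glued in, and the complex structure is extended over it.

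The main obstacle is the \emph{critical} handles of index exactly $n$, whose attaching sphere $S^{n-1}$ sits in the top isotropic dimension, i.e.\ must be made \emph{Legendrian}; here the isotropic h-principle fails, and this is precisely where Eliashberg's argument and the hypothesis $2n>4$ enter. I would realize the formally Legendrian attaching data by a genuine Legendrian embedding, using that one is allowed to co-adjust $J$ (which reshapes $\xi$ on the level sets and supplies the needed room) rather than keeping it fixed; the condition $n \ge 3$ guarantees through general position and the Whitney trick that the representative stays embedded and that its framing can be matched, so the standard critical Weinstein handle can be attached. This is the delicate part because in the critical dimension one has only an existence statement, not a flexible isotopy classification, and a $C^0$-small but $C^1$-large perturbation of the attaching map is unavoidable.

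Finally I would assemble the global homotopy of almost complex structures. On each handle the integrable model and any extension of the ambient $J$ are both compatible complex structures for the handle data, and the fiberwise space of such structures is contractible, so they are joined by a homotopy. Concatenating these handle homotopies with the homotopies on the intermediate sublevel sets produces a single path $\{J_t\}$ from $J_0 = J$ to an integrable $J_1$ for which $f$ is convex, which is the assertion.
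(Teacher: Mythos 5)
The paper does not prove this statement at all: it is imported verbatim from Eliashberg \cite{eliashberg90} and used as a black box, so there is no internal argument to compare yours against. Your outline is, in substance, a faithful sketch of Eliashberg's original proof (see also Cieliebak--Eliashberg, \emph{From Stein to Weinstein and Back}): induction over a handle decomposition ordered by critical value, realization of subcritical attaching spheres as isotropic spheres via Gromov's h-principle, special treatment of the index-$n$ handles via Legendrian realization, attachment of standard Weinstein handle models, and contractibility of the relevant spaces of compatible structures to assemble the homotopy $\{J_t\}$. Two points deserve correction, though neither changes the architecture. First, at the critical step you should not ``co-adjust $J$'' on the part already built --- that would destroy the integrability already achieved there; what is actually used is the existence (non-parametric) h-principle for Legendrian embeddings in the \emph{fixed} contact boundary of the sublevel set, which does hold via stabilization, and the genuine role of the hypothesis $2n>4$ is that for $n\ge 3$ the canonical framing of a Legendrian $(n-1)$-sphere can be corrected to match the prescribed smooth framing of the handle, whereas in dimension $4$ this is obstructed by Thurston--Bennequin-type constraints. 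Second, the handle-by-handle construction naturally yields a function that is $J_1$-convex only after composition with a suitable increasing reparametrization of the target; one must either carry this target equivalence through the induction or observe that it is harmless for the intended application (constructing the Stein structure and the contact structure on $\partial W$), which is how the statement is used in this paper.
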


We are clearly in the hypothesis since our manifold $W$ is almost complex, has dimension $2k+1+\mathrm{dim}\,M > 4$ (because $2k \ge \mathrm{dim\,}\xi = \mathrm{dim}\,M - 1$) and deformation retracts to $M$.

From Theorem \ref{thm:almostcomplex} we obtain a homotopy of almost complex structures $\{J_t\}$ in $W$ such that $J_0 = J$ and, $J_1$ is integrable. Moreover $(W, J_1)$ is a Stein domain and $\partial W$ inherits a contact structure given by $\xi_1 = J_1(T \partial W) \cap T \partial W$.
In fact, there is a homotopy of formal contact structures between $\xi_0$ and $\xi_1$ provided by $\xi_t = J_t(T \partial W) \cap T \partial W$.

\subsection{Properties of the embedding $e_0 \colon (M, \xi) \to (\partial W, \xi_1)$.}
Recall the following definition:
\begin{definition}
An embedding $e \colon (M_0, \xi_0, J_0) \to (M_1, \xi_1, J_1)$ is called \emph{formal contact} if there exists an homotopy of monomorphisms $\{\Psi_t \colon TM_0 \to TM_1\}_{t = 0}^1$ such that $\Psi_0 = de$, $\xi_0 = \Psi^{-1}_1(\xi_1)$ and $\Psi_1 \colon (\xi_0, J_0) \to (\xi_1, J_1)$ is complex.
\end{definition}

So far we have produced an embedding $e_0 \colon (M, \xi, j) \to (\partial W, \xi_0, J_0)$ that is formal contact with the constant homotopy equal to $de_0$. Indeed,
 $de_0^{-1}(\xi_0) = \xi$ and $de_0(\xi)$ is a complex subbundle of $\xi_0$. There is a family of complex isomorphisms $\Phi_t \colon \xi_0 \to \xi_t$ such that $\Phi_0 = \id$. Fix a Reeb vector field $R$ associated to $\xi$ and define $\widehat{R}_0 = de_0(R)$. Build a family $\{R_t\}$ of vector fields in $T\partial W$ satisfying $R_0|_{\im\, e_0} = \widehat{R}_0$ and $\langle R_t \rangle \oplus \xi_t = T\partial W$. We take a family of metrics $g_t$ in $\partial W$ defined in the following way: its restriction to $\xi_t$ is hermitic for the complex bundle $(\xi_t, J_t)$ and $R_t$ is unitary and orthogonal to $\xi_t$.

Extend $\Phi_t$ to an isomorphism of $T\partial W|_{\im\,e_0}$ in such a way that $\Phi_t(R_0) = R_t$. Define

\centerline{$E_t = \Phi_t \circ de_0 \colon TM \to T\partial W$.}

\noindent
The family $\{E_t\}_{t = 0}^1$ is composed of
bundle monomorphisms and clearly satisfies that $E_1^{-1}(\xi_t) = \xi$ and $E_1(\xi)$ is a complex subbundle of $\xi_1$. Therefore, $(e_0, E_t)$ is a formal contact embedding.

Define $\mathcal{N}_t = E_t(TM)^{\perp g_t}$ that is a bundle over $\im \, e_0$ which is complex by construction. $\mathcal{N}_0$ is isomorphic to $\underline{\mathbb{C}}^k$ and therefore all the bundles $\mathcal{N}_t$ are trivial complex bundles.

\subsection{Obtaining a contact embedding via h--principle}

The only missing piece to complete the puzzle is to prove that the embedding  $e_0$ can be made contact. 

Using $h$--principle it is possible to deform $(e_0, E_t)$ to a contact embedding thanks to the following theorem (cf. \cite[Theorem 12.3.1]{eliashbergmishachev}):

\begin{theorem}\label{thm:hprinciple}
Let $(e, E_t)$, $e \colon (M_0, \xi_0 = \ker \,\alpha_0) \to (M_1, \xi_1 = \ker \,\alpha_1)$, be a formal contact embedding between closed contact manifolds such that $\dim M_0 + 2 < \dim M_1$. Then, there exists a family of embeddings $\widetilde{e}_t \colon M_0 \to M_1$ such that:
\begin{itemize}
\item $\widetilde{e}_0 = e$ and $\widetilde{e}_1$ is contact,
\item $d\widetilde{e}_1$ is homotopic to $E_1$ through monomorphisms $G_t \colon TM_0 \to TM_1$, lifting the embeddings $\widetilde{e}_t$, such that $G_t(\xi_0) \subset \xi_1$ and the restrictions $G_t|_{\xi_0} \colon (\xi_0, d\alpha_0) \to (\xi_1, d\alpha_1)$ are symplectic.
\end{itemize}
\end{theorem}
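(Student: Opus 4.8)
The statement is the high-codimension h-principle for isocontact embeddings, and it is essentially \cite[Theorem 12.3.1]{eliashbergmishachev}; the honest task is therefore to check that the formal contact embedding $(e,E_t)$ manufactured in the previous subsection meets that theorem's hypotheses. I would first record that $e_0$ is an embedding of closed manifolds, that the codimension condition $\dim M_0 + 2 < \dim M_1$ holds in our application (because $\dim M_1 - \dim M_0 = 2k$ with $k$ large), and that $E_1$ is a bundle monomorphism carrying $\xi_0$ into $\xi_1$ by a fibrewise symplectic map; these are exactly the data the cited theorem consumes. For a reader who wants the mechanism rather than a citation, I would reprove it by the symplectization reduction described below.

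Passing to symplectizations, replace each $(M_i,\ker\alpha_i)$ by the open exact symplectic manifold $(SM_i,\omega_i)=(M_i\times\R,\,d(e^s\alpha_i))$ carrying the Liouville field $\partial_s$. A contact embedding $M_0\to M_1$ is the same datum as an $\R$--equivariant isosymplectic embedding $SM_0\to SM_1$, and the formal contact embedding $(e,E_t)$ lifts to a formal isosymplectic embedding of symplectizations by sending $\partial_s\mapsto\partial_s$ and using that $E_1|_{\xi_0}$ is symplectic to build a fibrewise $\omega_i$--compatible linear map on the remaining directions. The symplectic codimension equals the contact codimension, hence is positive, and the cohomological obstruction to the isosymplectic h-principle is vacuous because both $\omega_i$ are exact. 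Gromov's isosymplectic embedding h-principle then produces a genuine isosymplectic embedding homotopic, through formal isosymplectic embeddings, to the lift of $E_1$.

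It remains to descend this solution to a map $M_0\to M_1$. Here I would run the convex--integration step equivariantly with respect to the Liouville flow, e.g.\ by solving over the slice $M_i\times\{0\}$ and extending by the flow of $\partial_s$, so that the resulting embedding commutes with the $\R$--actions and descends to a contact embedding $\widetilde e_1$. The parametric family produced by the h-principle yields the isotopy $\widetilde e_t$ with $\widetilde e_0=e$, together with the covering monomorphisms $G_t$; tracking $\partial_s$ and restricting to the contact slices shows $G_t(\xi_0)\subset\xi_1$ with $G_t|_{\xi_0}$ symplectic, as required.

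The delicate point, and the reason the hypothesis $\dim M_0+2<\dim M_1$ cannot be relaxed to codimension $2$, is precisely this descent: a codimension-$2$ contact submanifold carries a rigid conformal symplectic normal bundle (equivalently, codimension-$2$ symplectic submanifolds of the symplectization are the borderline rigid case of Gromov's theorem), so the equivariant convex integration has no room to flex $E_1$ to a holonomic solution while staying embedded. In codimension $\ge 4$ this room exists, general position keeps the family embedded throughout, and the equivariance can be threaded through the whole parametric construction. Since all of this is encapsulated in \cite[Theorem 12.3.1]{eliashbergmishachev}, I would in the end simply invoke it.
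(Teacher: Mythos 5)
The paper offers no proof of this statement: it is stated as a quotation of \cite[Theorem 12.3.1]{eliashbergmishachev}, which is exactly what you do in your opening and closing paragraphs (verify the closedness, codimension, and formal-contact hypotheses, then invoke the reference), so your proposal matches the paper's treatment. Your intermediate symplectization sketch is extra and is the shakiest part --- the ``run convex integration equivariantly over a slice and extend by the Liouville flow'' step is essentially the whole difficulty and is not how the cited proof actually proceeds --- but since you ultimately fall back on the citation this does not affect the correctness of the proposal.
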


Theorem \ref{thm:hprinciple} applied to $(e_0, E_t)$ provides a family of embeddings $\{e_t\}$ in which $e_1 \colon (M, \xi) \to (\partial W, \xi_1)$ is a contact embedding and a family of monomorphisms $G_t \colon TM \to T\partial W$ that lift $e_t$ such that $G_0 = E_1$, $G_1 = de_1$ and $G_t(\xi) \subset \xi_1$ is a complex subbundle. 

\begin{lemma}\label{lem:normaltrivial}
The normal bundle of $\im (e_1)$ in $(\partial W, \xi_1)$ is trivial.
\end{lemma}
\begin{proof}
Recall that $\mathcal{N}_1= E_1(TM)^{\perp g_1}= G_0(TM)^{\perp g_1}$ is a trivial complex vector bundle. Define, for $t\in [1,2]$, $\mathcal{N}_t = G_{t-1}(TM)^{\perp g_1}$. Clearly, $\mathcal{N}_2$ is the normal bundle of the contact embedding $e_1$. Since $\mathcal{N}_1$ is a trivial vector bundle so is $\mathcal{N}_2$.

\end{proof}

Denote the $2m$--dimensional polydisk by $P^{2m}(r_1, \ldots, r_m) = \mathbb{D}^2(r_1) \times \cdots \times \mathbb{D}^2(r_m)$ and abbreviate it as $P^{2m}(r)$ when $r_1 = \ldots = r_m = r$. The following result summarizes the work completed in this section and an important consequence (namely, the title of the section): $M \times P^{2m}(\varepsilon)$ admits a smooth compactification into a Stein fillable contact manifold.

\begin{theorem}\label{thm:MD2ktight}
Any closed contact manifold $(M, \ker \, \alpha)$ contact embeds in the boundary of a Stein fillable manifold with trivial normal bundle.
Furthermore, there exists $k \ge 1$ such that for any $m\geq k$
$$
\left(M \times P^{2m}(\varepsilon), \ker\left(\alpha + \sum_{i = 1}^m r^2_i\,d\theta_i \right)\right)
$$
is tight with $\varepsilon > 0$ small enough depending only on $\alpha$ and $k$.
\end{theorem}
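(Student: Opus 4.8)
The first assertion has essentially been established in the construction of the previous subsections. The plan is to fix the integer $k$ to be large enough that two conditions hold simultaneously: that $\xi^* \oplus \tau$ is a trivial complex bundle, which holds for all $k$ beyond some threshold by the standard stabilization of vector bundles, and that the codimension hypothesis $\dim M + 2 < \dim \partial W = \dim M + 2k$ of Theorem \ref{thm:hprinciple} is satisfied, i.e. $k \ge 2$. With such a $k$, the construction produces a genuine contact embedding $e_1 \colon (M, \xi) \to (\partial W, \xi_1)$ into the boundary of the Stein domain $(W, J_1)$, and Lemma \ref{lem:normaltrivial} shows that its normal bundle is the trivial complex bundle $\underline{\mathbb{C}}^k$. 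Since $W$ is compact, $\partial W$ is closed, and the first statement of the theorem follows at once.

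For the tightness assertion I would first record that $\partial W$ is tight: being the boundary of a Stein domain it is Stein fillable, and a Stein fillable contact manifold carries no overtwisted disk by the Gromov--Eliashberg filling obstruction. Tightness passes to open subsets, so it suffices to exhibit $M \times P^{2k}(\varepsilon)$ as an open piece of $\partial W$. To this end I would invoke the standard neighborhood theorem for contact submanifolds: the germ of $(\partial W, \xi_1)$ along $e_1(M)$ is determined by the contact form $\alpha$ on $M$ together with its conformal symplectic normal bundle, which here is the trivial symplectic bundle $\underline{\mathbb{C}}^k$ with its standard structure. Consequently a tubular neighborhood of $e_1(M)$ is contactomorphic to a neighborhood of $M \times \{0\}$ in $\left(M \times \mathbb{C}^k, \ker(\alpha + \sum_{i=1}^k r_i^2\,d\theta_i)\right)$, and any polydisk $P^{2k}(\varepsilon)$ of sufficiently small radius sits inside that neighborhood. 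Hence $M \times P^{2k}(\varepsilon)$ is tight.

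To reach all $m \ge k$ I would run the entire construction with $\tau$ replaced by $\tau_m = \tau \oplus \underline{\mathbb{C}}^{m-k}$; since $\xi^* \oplus \tau$ is already trivial so is $\xi^* \oplus \tau_m$, and the codimension condition only improves. This yields a Stein fillable $\partial W_m$ containing $M$ as a contact submanifold with trivial normal bundle $\underline{\mathbb{C}}^m$, and the neighborhood theorem again produces an $\varepsilon_m > 0$ with $M \times P^{2m}(\varepsilon_m)$ tight.

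The delicate point, which I expect to be the real obstacle, is that the radius must be \emph{uniform}: a single $\varepsilon = \varepsilon(\alpha, k)$ has to serve for every $m \ge k$. The naive estimate from the neighborhood theorem gives an $\varepsilon_m$ that could a priori degenerate as $m \to \infty$, because the two deformations used in the construction, the Eliashberg integrability deformation of Theorem \ref{thm:almostcomplex} and the $h$--principle of Theorem \ref{thm:hprinciple}, are performed on all of $\partial W_m$ and need not respect the splitting. The way I would control this is to arrange every choice for rank $m$ to be a product of the fixed rank--$k$ data with the standard structure on the extra factor $\underline{\mathbb{C}}^{m-k}$: a product metric $h_k \oplus h_{\mathrm{std}}$, a product complex structure, and Morse and deformation data that are constant in the $\underline{\mathbb{C}}^{m-k}$ directions. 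Then the conformal symplectic normal bundle of $e_1(M)$ in $\partial W_m$ splits as the rank--$k$ part together with $(m-k)$ standard copies of $\mathbb{C}$, the width of the tubular neighborhood in the new directions becomes a fixed constant independent of $m$, and the total radius is bounded below by a quantity depending only on $\alpha$ and $k$. This reduces the uniformity to the single rank--$k$ estimate and completes the argument.
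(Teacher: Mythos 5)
Your argument is correct and coincides with the paper's own proof: the embedding $e_1$ together with Lemma \ref{lem:normaltrivial} gives the first claim, the standard contact neighborhood theorem identifies a tubular neighborhood of $e_1(M)$ with a neighborhood of $M\times\{0\}$ in $\bigl(M\times\R^{2m},\ker(\alpha+\sum_{i=1}^m r_i^2\,d\theta_i)\bigr)$, tightness follows because open subsets of the Stein fillable $(\partial W,\xi_1)$ are tight, and higher codimensions come from stabilizing $\tau$ to $\tau\oplus\underline{\mathbb{C}}^{m-k}$. The only place you go beyond the paper is your final paragraph on the uniformity of $\varepsilon$ over all $m\ge k$: the paper simply writes ``for some $\varepsilon_0>0$'' without addressing this dependence, so your (sketched) product-structure argument is an extra precaution rather than a deviation.
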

\begin{proof}
The map $e_1$ proves the first part because by Lemma \ref{lem:normaltrivial} the normal bundle of the contact embedding $e_1 \colon (M, \xi) \to (\partial W, \xi_1)$ is trivial. Notice that the codimension of the embedding is equal to $2k = \mathrm{dim}\,\tau$ and by replacing $\tau$ with $\tau' = \tau \oplus \underline{\mathbb{C}}^{m-k}$ we obtain embeddings of arbitrary codimension $2m \ge 2k$.

Suppose henceforth that $m \ge k$.
By an standard neighborhood theorem in contact geometry it follows that there is a contactomorphism between a neighborhood of $\im (e_1)$ in $(\partial W, \xi_1)$ and a neighborhood of $M \times \{0\}$ in $(M \times \R^{2m}, \ker(\alpha + \sum_{i = 1}^k r_i^2 d\theta_i))$. Therefore, for some $\varepsilon_0 > 0$, the previous contactomorphism provides an embedding from $M \times P^{2m}(\varepsilon_0)$ into $\partial W$.

Finally, since $(\partial W, \xi_1)$ is Stein filable, it is tight. Thus, any of its open subsets is also tight and the conclusion follows.
\end{proof}

\section{$M \times \mathbb{D}^2(\varepsilon)$ is tight if $\varepsilon$ is small}

The argument leading to Theorem \ref{thm:MD2ktight} provided no bound on the first positive integer $k$ such that $M \times P^{2k}(\varepsilon, \ldots, \varepsilon)$ is tight. Indeed, $k$ was fixed at the beginning of Section \ref{sec:MD2k}, depending on the rank of $\tau \to M$, the bundle constructed to make the sum $\xi^* \oplus \tau$ trivial.

The insight needed to prove Theorem \ref{thm:MD2tight} is supplied by the understanding of overtwisted contact manifolds briefly discussed in the introduction. To be more concrete, the precise statement we will use in this section, extracted from \cite{CMP}, is the following:

\begin{theorem}\label{thm:CMP}
Suppose that $(M, \ker \, \alpha)$ is an overtwisted contact manifold. Then, if $R$ is large enough, $(M \times \mathbb{D}^2(R), \ker(\alpha + r^2d\theta))$ is also overtwisted.
\end{theorem}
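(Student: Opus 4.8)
The plan is to deduce overtwistedness of the product from the existence of a plastikstufe, exploiting the equivalence between overtwistedness and the presence of a plastikstufe established in \cite{CMP} and \cite{huang} (building on \cite{BEM}). Concretely, I would show that a large enough disk factor promotes the overtwisting object of $M$ to an overtwisting object of one higher dimension inside $M \times \mathbb{D}^2(R)$, whence overtwistedness follows from the equivalence.

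First I would use the hypothesis that $(M,\ker\alpha)$ is overtwisted to fix an embedded plastikstufe $\mathcal{P} \subset M$, equivalently a copy of the standard overtwisted model $\mathfrak{ot}$ of \cite{BEM}, together with a model neighborhood in which $\alpha$ takes a convenient normal form. I would then regard $(M \times \mathbb{D}^2(R), \ker(\alpha + r^2\,d\theta))$ as a contact fibration over the base $\mathbb{D}^2(R)$ in the sense of \cite{presas2007class}, whose fibers are copies of $(M,\ker\alpha)$ and whose contact connection has curvature governed by the base area form $2\,dx\wedge dy$ (note $r^2\,d\theta = x\,dy - y\,dx$). Consequently the holonomy of this connection around a circle of radius $R$ is the Reeb flow of $\alpha$ for a time growing like the enclosed area $\pi R^2$.

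Next I would build the overtwisting object of the total space by parallel transport of $\mathcal{P}$ over the disk, the suspension construction already invoked in the Remark of the Introduction and in application {\bf 2}: sweeping $\mathcal{P}$ over $\mathbb{D}^2(R)$ produces a singular Legendrian foliation of one higher dimension whose core is the suspension of the core of $\mathcal{P}$. The role of ``$R$ large'' is exactly to guarantee that the accumulated holonomy, a long-time Reeb flow, suffices to close the transported family into a genuine plastikstufe carrying the correct Legendrian boundary leaf; this is where the area--time correspondence of the connection enters quantitatively, and it matches the monotone threshold $R_0$ predicted by the Corollary. Once a plastikstufe is exhibited in $M \times \mathbb{D}^2(R)$, overtwistedness follows from \cite{huang}.

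The step I expect to be the main obstacle is the third one: controlling the parallel transport well enough that the swept-out object is an honest plastikstufe, i.e.\ that its leaves stay Legendrian and that the singular (elliptic) locus and the Legendrian boundary condition survive the monodromy. Since the fiberwise plastikstufe need not be invariant under the holonomy, I would either arrange approximate invariance near the core, shrinking $\mathcal{P}$ and using the normal form, or, more robustly, transport the more flexible bordered $GPS$ object of \cite{niederkrugerpresas}, whose defining conditions are open and hence stable under the construction, appealing to \cite{huang} only at the end to convert back to overtwistedness. Pinning down the precise lower bound on $R$ in terms of the Reeb dynamics of $\alpha$ on the chosen model neighborhood is the quantitative heart of the argument; an alternative, more computational route would instead exhibit a copy of the higher-dimensional model $\mathfrak{ot}$ directly inside the stabilized model neighborhood for large $R$, at the cost of an explicit contact-geometric matching of the standard overtwisted models.
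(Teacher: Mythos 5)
First, a point of order: the paper does not prove this statement. Theorem \ref{thm:CMP} is imported verbatim from \cite{CMP} (``the precise statement we will use in this section, extracted from \cite{CMP}''), so there is no in-paper argument to compare yours against; what follows is an assessment of your proposal on its own terms.

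Your main route has a genuine gap at exactly the point you flag, and the way you propose to use ``$R$ large'' there is backwards. For the fibration $M\times\mathbb{D}^2(R)\to\mathbb{D}^2(R)$ with contact form $\alpha+r^2d\theta$, the horizontal lift of $v\in T_b\mathbb{D}^2$ is $v-\lambda(v)R_\alpha$ with $\lambda=r^2d\theta$, so the holonomy around any embedded loop enclosing area $A>0$ is the time-$(-2A)$ Reeb flow of $\alpha$. This is never the identity, and it grows with the enclosed area; in particular, sweeping a plastikstufe $\mathcal{P}\subset M$ around a circle of radius $r$ returns $\phi^{R_\alpha}_{-2\pi r^2}(\mathcal{P})$, which does not match up with $\mathcal{P}$, so the transported family simply does not close into a compact object. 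A longer Reeb flow does not ``suffice to close the transported family''---it makes the mismatch worse---so increasing $R$ works against you here. (Compare application {\bf 2} in the introduction and the Remark: there the sweeping is done over the loop $\{0\}\times\mathbb{S}^1$ in $M\times(-\varepsilon,\varepsilon)\times\mathbb{S}^1$ with form $\alpha+t\,d\theta$, precisely because that loop has trivial holonomy; and in \cite{CPS} the closing-up is achieved only by composing the holonomy with an auxiliary small positive loop of contactomorphisms, which you do not have here.) Your two fallbacks do not repair this: approximate invariance of $\mathcal{P}$ under a Reeb flow of time $\sim 2\pi r^2$ is not something you can arrange by shrinking $\mathcal{P}$, and transporting a $GPS$ instead would at best produce an immersed $GPS$, i.e.\ $GPS$-overtwistedness, which the paper itself describes as ``probably weaker'' than overtwistedness; \cite{huang} converts (embedded, bordered) plastikstufe-type objects into overtwistedness, not immersed $GPS$'s, so the final conversion step is not available.

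The one sentence of your proposal that points at a workable argument is the last one: exhibiting the $(2n+3)$-dimensional overtwisted model of \cite{BEM} directly inside (a model neighborhood in) $M\times\mathbb{D}^2(R)$ for $R$ large, by an explicit matching of the standard overtwisted models across dimensions. That is essentially how the result is established in \cite{CMP}, and it is also consistent with the true role of ``$R$ large'': it is a quantitative size requirement (enough symplectic room in the disk factor to house the higher-dimensional model, with threshold depending on $\alpha$ and the overtwisted object in $M$), not a holonomy condition. As written, however, you only name this route; the parallel-transport construction you actually develop does not prove the theorem.
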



The idea is to embed $\partial W \times \mathbb{D}^2(R)$ in the boundary $\partial V$ of a Weinstein manifold. Using the embedding constructed in the previous section we obtain then an embedding $M \times \mathbb{D}^2(R) \rightarrow \partial V$ that has trivial normal bundle. This leads to the proof of a statement similar to Theorem \ref{thm:MD2ktight} in which we replace $(M, \ker \,\alpha)$ by $(M \times \mathbb{D}^2(R), \ker (\alpha + r^2d\theta))$.
Note that it is key to make sure that $R$ is arbitrarily large.

A Weinstein manifold $(W, \omega, f, Y)$ is a manifold with boundary $W$ equipped with a symplectic structure $\omega$, a Morse function $f \colon W \to \R$ and a Liouville vector field $Y$ that is a pseudo--gradient for $f$. Notice that the symplectic form is automatically exact, $\omega = L_Y \omega = d \,i_Y \omega$, so the boundary of a Weinstein manifold is exact symplectically fillable.

The product of Weinstein manifolds $(W_1, \omega_1, g_1, Y_1)$ and $(W_2, \omega_2, g_2, Y_2)$ can be equipped with a Weinstein structure. Indeed, define $\omega' = \omega_1 + \omega_2$ and $Y' = Y_1 + Y_2$. Clearly, $Y'$ is Liouville for $\omega'$. Suppose for simplicity that $g_1$ and $g_2$ are strictly positive (a rescaling would make the argument work in general) and define a function on $W = W_1 \times W_2$ by
$$
f_q = (g_1^q + g_2^q)^{1/q}
$$
for an arbitrary $q > 1$. It is easy to check that $\mathrm{Crit}(f_q) = \mathrm{Crit}(g_1) \times \mathrm{Crit}(g_2)$, the function $f_q$ is Morse and $Y'$ is pseudogradient for $f_q$.

The Stein fillable manifold $W$ supplied by Theorem \ref{thm:MD2ktight} is naturally equipped with a Weinstein structure $(W = f^{-1}(0,1], \omega, f, Y)$ that satisfies $\xi_1 = \ker(i_Y\omega|_{\partial W})$.
By the preceeding discussion,
a Weinstein structure in $W \times \R^2$ is given by $\omega + dx \wedge dy$, $X = Y + r\frac{\partial}{\partial r}$ and
$$
f_q = \left(f^q + \left(\frac{x^2+y^2}{(2R)^2}\right)^q \right)^{1/q}
$$
The critical points of $f_q$ have the form $(p, 0, 0)$, where $p \in \mathrm{Crit}(f)$.

The Liouville vector field $X$ is transverse to $\partial W \times \R^2$. Our aim is now to embed $\partial W \times \mathbb{D}^2(R)$ into a level set of $f_q$ by following $\phi_t$, the flow of $X$. We can easily show:

\begin{proposition}\label{prop:yellow}
For any $\delta>0$, there exists $q > 1$ large enough and a function $\mu\colon \partial W  \times \mathbb{D}^2(R) \to \R^-$ such that $||\mu||_{\mathcal C^0} \leq \delta$ and $\phi_{\mu}\colon \partial W  \times \mathbb{D}^2(R) \to W \times \mathbb{D}^2(R)$ satisfies $\phi_{\mu}(\partial W  \times \mathbb{D}^2(R)) \subset f_q^{-1}(1)$.
\end{proposition}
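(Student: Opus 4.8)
The plan is to define $\mu(p,x,y)$ as the (negative) time for which one must flow along $X$ from a point $(p,x,y) \in \partial W \times \mathbb{D}^2(R)$ in order to land on the level set $f_q^{-1}(1)$, and then to show that this time tends to $0$ uniformly as $q \to \infty$. First I would evaluate $f_q$ on the source. Writing $s = \tfrac{x^2+y^2}{(2R)^2}$ and recalling $\partial W = f^{-1}(1)$, a point of $\partial W \times \mathbb{D}^2(R)$ satisfies $f = 1$ and $0 \le s \le \tfrac{R^2}{(2R)^2} = \tfrac14$, so
$$
1 \;\le\; f_q(p,x,y) \;=\; (1 + s^q)^{1/q} \;\le\; \left(1 + \tfrac{1}{4^q}\right)^{1/q}.
$$
Hence $\sup_{\partial W \times \mathbb{D}^2(R)}(f_q - 1) \to 0$ as $q \to \infty$. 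Since $X$ is a pseudo-gradient for $f_q$ and $\partial W \times \mathbb{D}^2(R)$ lies in the region $\{f_q \ge 1\}$, flowing backwards along $X$ decreases $f_q$ down to $1$, and the smaller $f_q - 1$ is on the source, the shorter the required time.

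The heart of the matter is a lower bound on $df_q(X)$ that is uniform in $q$ on a fixed neighborhood of $\partial W \times \mathbb{D}^2(R)$. A direct computation, using $X\cdot f = Y\cdot f$ and $X\cdot s = r\partial_r\, s = 2s$, gives
$$
df_q(X) \;=\; \left(\frac{f}{f_q}\right)^{q-1}(Y\cdot f) \;+\; 2s\left(\frac{s}{f_q}\right)^{q-1},
$$
where $Y\cdot f = df(Y)$ is the strictly positive pseudo-gradient derivative along the regular level $\partial W$. Both summands are non-negative. On a fixed neighborhood $\mathcal{U} = \{f \ge 1-\eta,\ s \le \tfrac12\}$ (with $\eta$ small) there are no critical points of $f$, so $Y\cdot f \ge c_0 > 0$ there; moreover $s/f < 1$ on $\mathcal{U}$, whence $(f/f_q)^{q-1} = (1 + (s/f)^q)^{-(q-1)/q} \to 1$ uniformly on $\mathcal{U}$. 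Therefore there is $q_0$ with $df_q(X) \ge c_0/2$ on $\mathcal{U}$ for all $q \ge q_0$, independently of $q$.

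With this in hand I would, for each $(p,x,y)$, study $G(t) = f_q(\phi_t(p,x,y))$; the estimate gives $G'(t) \ge c_0/2$ as long as $\phi_t$ stays in $\mathcal{U}$, so $G$ is strictly increasing with $G(0) = f_q(p,x,y) \ge 1$. The intermediate value theorem then yields a unique $\mu(p,x,y) \le 0$ with $G(\mu) = 1$, and the slope bound forces $|\mu(p,x,y)| \le \tfrac{2}{c_0}\big(f_q(p,x,y) - 1\big) \le \tfrac{2}{c_0}\sup(f_q - 1)$. Since the right-hand side tends to $0$, for $q$ large the entire trajectory remains in $\mathcal{U}$ and in $W \times \mathbb{D}^2(R)$ (the backward flow of $X = Y + r\partial_r$ moves inward in $W$ and decreases $r$), so $\phi_\mu$ is well defined with image in $f_q^{-1}(1)$; the implicit function theorem, applicable because $\partial_t G = df_q(X) \neq 0$ at $t = \mu$, makes $\mu$ smooth. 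Choosing $q$ so that $\tfrac{2}{c_0}\sup(f_q - 1) \le \delta$ completes the argument.

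The step I expect to be the genuine obstacle is the uniform-in-$q$ lower bound on $df_q(X)$: the factor $(f/f_q)^{q-1}$ is a $q$-th power that could a priori blow up or collapse, so one must confine the estimate to a neighborhood on which the base $s/f$ is bounded away from $1$, and verify that this neighborhood can be chosen independently of $q$ while the required flow time simultaneously shrinks to $0$.
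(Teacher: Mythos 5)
Your argument is correct and follows the same route as the paper, whose proof is just the two-sentence observation that $f_q^{-1}(1)$ becomes $C^\infty$--close to $\partial W\times\mathbb{D}^2(R)$ as $q\to\infty$ and that $X$ is transverse to both. Your write-up in fact supplies the one point the paper leaves implicit --- the lower bound on $df_q(X)$ uniform in $q$, obtained from the identity $df_q(X)=(f/f_q)^{q-1}(Y\cdot f)+2s(s/f_q)^{q-1}$ on a neighborhood where $s/f$ is bounded away from $1$ --- so no gap remains.
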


\begin{proof}
For $q \to \infty$,  the level set $f_q^{-1}(1)$ gets $C^{\infty}$--close to the submanifold $\partial W  \times \mathbb{D}^2(R)$. Since $X$ is transverse to both of them, the result follows.
\end{proof}

\begin{figure}[ht]
\includegraphics[scale=0.8]{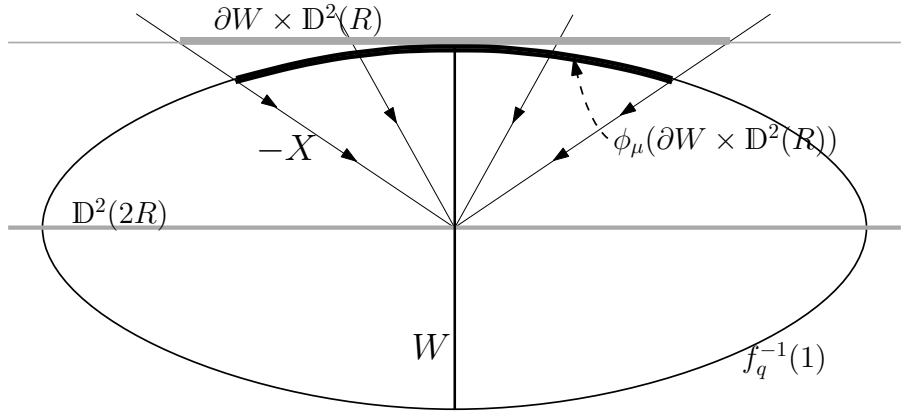}
\caption{Contact embedding of $\partial W  \times \mathbb{D}^2(R)$ into $f_q^{-1}(1)$.}\label{fig:embed}
\end{figure}

Proposition \ref{prop:yellow} produces a contactomorphism as the next lemma states.

\begin{lemma} \label{lem:trans}
Let $e\colon H \hookrightarrow M$ be a hypersurface transverse to a nowhere vanishing Liouville vector field $X$ in $(M, \omega)$, the $1$--form $e^* i_X \omega$ defines a contact structure on $H$. Moreover, if $\phi_t$ denotes the Liouville flow starting at $H$ and $s:H \to \R$ is a fixed function, then $\phi_{s} \circ e:H \hookrightarrow M$ is contactomorphic to $e$ provided the flow $\phi_s$ is well--defined.
\end{lemma}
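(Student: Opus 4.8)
The statement splits into two claims: first, that $e^* i_X\omega$ is a contact form on $H$; second, that pushing $H$ along the Liouville flow by any function $s$ yields a contactomorphic hypersurface. The plan is to treat these separately, since the first is a local computation and the second is a naturality argument for the Liouville flow.

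For the first claim I would work directly with $\alpha := e^* i_X\omega$ and show $\alpha \wedge (d\alpha)^{n-1} \neq 0$ on $H$, where $\dim M = 2n$. The key input is that $X$ is Liouville, so $L_X\omega = \omega$, which by Cartan gives $d\,i_X\omega = \omega$ on $M$ (since $\omega$ is closed). Pulling back, $d\alpha = e^*\omega$, which is the restriction of the symplectic form to $H$. Now I would use the transversality of $X$ to $H$: since $X$ is nowhere tangent to $H$, the vector $X$ together with $TH$ spans $TM$ along $H$. The condition $\omega \wedge \omega^{n-1} = \omega^n \neq 0$ is a volume form on $M$; contracting with $X$ and using that $i_X(\omega^n) = n (i_X\omega)\wedge \omega^{n-1}$, I can detect nondegeneracy of $\alpha \wedge (d\alpha)^{n-1}$ on $H$ by pairing against a frame of $TH$ and noting that $X$ supplies the transverse direction needed to recover the top form $\omega^n$. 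Concretely, evaluating $i_X(\omega^n)$ on a basis of $TH$ recovers $n\,\alpha\wedge(d\alpha)^{n-1}$ up to the harmless identification $d\alpha = e^*\omega$, and this is nonzero precisely because $\omega^n$ is a volume form and $X$ is transverse.

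For the second claim the strategy is to exploit that the Liouville flow $\phi_t$ conformally rescales $\omega$: from $L_X\omega = \omega$ one gets $\phi_t^*\omega = e^t\omega$, and consequently $\phi_t^* i_X\omega = e^t\, i_X\omega$ as well, since $X$ is the generator and $i_X\omega$ is preserved up to the same conformal factor. I would first handle the case where $s$ is constant: then $\phi_s\circ e$ differs from $e$ by moving along the flow for a fixed time, and the contact form pulls back to $e^s$ times the original, so the two embeddings are manifestly contactomorphic with the contactomorphism being $\phi_s$ itself. For general $s\colon H\to\R$ I would parametrize the isotopy $H \to M$, $p\mapsto \phi_{u\, s(p)}(e(p))$ for $u\in[0,1]$, and verify that each member is transverse to $X$ (so each carries an induced contact structure by the first part), giving a smooth family of contact hypersurfaces. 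Applying Gray stability to this family of contact structures on the abstract manifold $H$ then produces the desired contactomorphism between $e$ and $\phi_s\circ e$.

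The main obstacle I anticipate is the general, non-constant $s$ case: for constant $s$ the conformal scaling argument is immediate, but when $s$ varies the image $\phi_s\circ e(H)$ is no longer a single flow-time level off $H$, so the pullback of $i_X\omega$ acquires a term involving $ds$ and the naive conformal factor no longer suffices. I would circumvent this by not attempting an explicit formula and instead invoking Gray stability on the one-parameter family of contact structures obtained above, which only requires that transversality to $X$ persist along the isotopy; this transversality is an open condition and holds throughout provided the flow $\phi_s$ is well-defined, matching the hypothesis in the statement. Checking that transversality is preserved for all $u\in[0,1]$ is the step deserving genuine care, since the derivative of $p\mapsto \phi_{s(p)}(e(p))$ mixes the flow direction with $de$, and one must confirm the $X$-component of the image plane never degenerates.
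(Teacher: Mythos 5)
The paper states this lemma without proof, so your attempt can only be measured against the standard argument. Your first claim is handled correctly: $d(e^*i_X\omega)=e^*\omega$ by Cartan's formula, and the identity $i_X(\omega^n)=n\,(i_X\omega)\wedge\omega^{n-1}$ restricted to $TH$ shows $\alpha\wedge(d\alpha)^{n-1}$ is a volume form on $H$ precisely because $X$ is transverse. That part is fine.

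The second claim is where you go astray. The obstacle you anticipate --- that for non-constant $s$ the pullback of $\lambda:=i_X\omega$ ``acquires a term involving $ds$'' that spoils the conformal factor --- does not actually exist. Writing $\psi(p)=\phi_{s(p)}(e(p))$, one has $d\psi(v)=d\phi_{s(p)}(de(v))+ds(v)\,X$, hence
\begin{equation*}
\psi^*\lambda(v)=\bigl(\phi_{s(p)}^*\lambda\bigr)(de(v))+ds(v)\,\lambda(X)=e^{s(p)}\,(e^*\lambda)(v),
\end{equation*}
because $\lambda(X)=\omega(X,X)=0$. The $ds$ term vanishes identically, so $\psi^*\lambda=e^{s}\,e^*\lambda$ and the identity map of $H$ is already the desired contactomorphism; no isotopy argument is needed. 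By declining to do this computation you replace a one-line proof with an appeal to Gray stability, and that appeal is a genuine gap: Gray stability requires $H$ closed (or at least control ensuring the generating vector field of the isotopy integrates), whereas in the paper's application $H=\partial W\times\mathbb{D}^2(R)$ is non-compact. Moreover, had you carried out the computation along your own interpolation $\psi_u(p)=\phi_{u s(p)}(e(p))$, you would have found $\psi_u^*\lambda=e^{us}\,e^*\lambda$, i.e.\ the family of contact structures is constant and there is nothing for Gray stability to do. You should replace the entire second half of your argument with the direct pullback computation above.
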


Notice that the level set $f_q^{-1}(1)$ is the boundary of the Weinstein manifold $V = f_q^{-1}(0,1]$.
Denote $\alpha' = i_X (\alpha + dx \wedge dy)$.
A straightforward application of Lemma \ref{lem:trans} concludes the following:

\begin{proposition}\label{prop:WR2intoWeinstein}
For any $R > 0$, the contact manifold $(\partial W \times \mathbb{D}^2(R), \ker (\alpha'|_{\partial W \times \mathbb D^2(R)}))$ admits a contact embedding into the boundary of a Weinstein manifold.
\end{proposition}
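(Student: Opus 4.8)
The plan is to recognize $\partial W \times \mathbb{D}^2(R)$ as a hypersurface in the symplectic manifold $(W \times \R^2, \omega + dx\wedge dy)$ transverse to the Liouville field $X = Y + r\frac{\partial}{\partial r}$, and then to apply Lemma \ref{lem:trans} twice: first to see that the restriction of the primitive $\alpha' = i_X(\omega + dx\wedge dy)$ is a contact form, and then to slide this hypersurface onto the boundary of the Weinstein domain $V = f_q^{-1}(0,1]$ along the Liouville flow without changing its contact type.

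First I would verify the hypotheses of Lemma \ref{lem:trans}. Since $\dim(\partial W \times \mathbb{D}^2(R)) = \dim(W \times \R^2) - 1$, the inclusion $e$ is a codimension-one embedding. Transversality of $X$ is inherited from the transversality of $X$ to $\partial W \times \R^2$ noted above, because $\partial W \times \mathbb{D}^2(R)$ is open in $\partial W \times \R^2$; concretely, the $Y$-component of $X$ is transverse to $\partial W$ as $Y$ is the outward Liouville field of the Weinstein domain $W$. Lemma \ref{lem:trans} then gives that $e^*\alpha' = \alpha'|_{\partial W \times \mathbb{D}^2(R)}$ is a contact form, so the left-hand side of the statement is indeed a contact manifold.

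Next I would invoke Proposition \ref{prop:yellow} with a value $\delta > 0$ chosen small enough that the backward $X$-flow issued from $\partial W \times \mathbb{D}^2(R)$ stays well-defined in $\interior W \times \R^2$ (possible because $W$ is compact and the negative-time flow of $Y$ from $\partial W$ enters $\interior W$). The proposition supplies a large $q$ and a function $\mu \colon \partial W \times \mathbb{D}^2(R) \to \R^-$, with $\|\mu\|_{\mathcal{C}^0} \le \delta$, whose flow satisfies $\phi_\mu(\partial W \times \mathbb{D}^2(R)) \subset f_q^{-1}(1)$. Because $\mathrm{Crit}(f_q) = \{(p,0,0) : p \in \mathrm{Crit}(f)\}$ sits at values strictly below $1$, the value $1$ is regular and $V = f_q^{-1}(0,1]$ is a compact Weinstein domain whose boundary $\partial V = f_q^{-1}(1)$ carries the contact structure $\ker(\alpha'|_{\partial V})$, since $X$ is its Liouville field and is transverse to $f_q^{-1}(1)$. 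Applying the ``moreover'' part of Lemma \ref{lem:trans} with $s = \mu$, the map $\phi_\mu \circ e$ is contactomorphic to $e$; hence it is a contact embedding of $(\partial W \times \mathbb{D}^2(R), \ker \alpha')$ onto an open subset of $\partial V$, which is exactly the desired embedding into the boundary of a Weinstein manifold.

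I do not expect a serious obstacle, in line with the text's description of this as a direct application of Lemma \ref{lem:trans}. The only points needing care are the well-definedness of $\phi_\mu$, which is handled by the smallness of $\delta$, and the compatibility of the two contact structures on the image: the one induced there as a transverse hypersurface of $W \times \R^2$ and the one it inherits as a subset of the Weinstein boundary $\partial V$ are both cut out by $\alpha' = i_X(\omega + dx\wedge dy)$, so they coincide.
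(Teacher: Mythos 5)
Your proposal is correct and follows the paper's own route exactly: the paper likewise obtains the embedding by combining Proposition \ref{prop:yellow} (pushing $\partial W \times \mathbb{D}^2(R)$ onto the level set $f_q^{-1}(1) = \partial V$ along the Liouville flow of $X$) with Lemma \ref{lem:trans}, which guarantees the induced contact structures agree. The extra care you take about the well-definedness of $\phi_\mu$ and the regularity of the level $1$ is a welcome elaboration of details the paper leaves implicit, but it is not a different argument.
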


Combining the last proposition and the results from the previous section we obtain:

\begin{corollary}\label{coro:fideoistight}
Given a contact manifold $(M, \alpha)$ there exists $k \in \mathbb{N}$ and $\varepsilon_0 > 0$ such that for every $R > 0$ the contact manifold
$(M \times P^{2k+2}(\varepsilon_0, \ldots, \varepsilon_0, R), \ker(\alpha + \sum _{i = 1}^{k+1}r_i^2 d\theta_i))$ is tight.
\end{corollary}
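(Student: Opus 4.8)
The plan is to realize the contact manifold in the statement as an open subset of the boundary of a Weinstein manifold, which is automatically tight, and then use that tightness descends to open subsets. Concretely, I would glue together two ingredients already produced in the excerpt: the codimension-$2k$ embedding of $M$ (with a polydisk neighborhood) into $\partial W$ coming from Theorem \ref{thm:MD2ktight}, and the embedding of $\partial W \times \mathbb{D}^2(R)$ into a Weinstein boundary coming from Proposition \ref{prop:WR2intoWeinstein}. The integer $k$ and the radius $\varepsilon_0$ should be fixed once and for all from the first ingredient, which carries no dependence on $R$, while the second ingredient is applied separately for each $R>0$; this is exactly the quantifier order demanded by the statement.

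First I would fix $k$ and the contact embedding $e_1\colon(M,\ker\alpha)\to(\partial W,\xi_1)$ with trivial normal bundle of codimension $2k$ supplied by Theorem \ref{thm:MD2ktight}. Since the normal bundle is trivial and symplectic, the standard neighborhood theorem for contact submanifolds gives, for some $\varepsilon_0>0$, a contactomorphism
$$
\Phi\colon\left(M \times P^{2k}(\varepsilon_0),\ \ker\left(\alpha + \sum_{i=1}^{k} r_i^2\, d\theta_i\right)\right)\ \longrightarrow\ U \subset (\partial W,\xi_1)
$$
onto a neighborhood $U$ of $\im(e_1)$, realizing the normal form in which the ambient contact form $i_Y\omega|_{\partial W}$ restricts literally to $\alpha+\sum_{i=1}^{k}r_i^2\,d\theta_i$. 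Next I would multiply by the disk: on $\partial W \times \mathbb{D}^2(R)$ the form $\alpha'$ of Proposition \ref{prop:WR2intoWeinstein} equals $(i_Y\omega|_{\partial W}) + r^2\,d\theta$, so writing the last disk factor with coordinates $(r_{k+1},\theta_{k+1})$ the product map
$$
\Phi \times \id_{\mathbb{D}^2(R)}\colon\ M \times P^{2k+2}(\varepsilon_0,\ldots,\varepsilon_0,R)\ \longrightarrow\ U \times \mathbb{D}^2(R)\subset \partial W \times \mathbb{D}^2(R)
$$
pulls back $\alpha'$ to $\alpha+\sum_{i=1}^{k}r_i^2\,d\theta_i+r_{k+1}^2\,d\theta_{k+1}=\alpha+\sum_{i=1}^{k+1}r_i^2\,d\theta_i$, the desired contact form, and is therefore a contact open embedding.

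Finally I would invoke Proposition \ref{prop:WR2intoWeinstein}, which for each $R>0$ embeds $(\partial W \times \mathbb{D}^2(R),\ker\alpha')$ into the boundary $\partial V$ of a Weinstein manifold $V$. Composing the two embeddings presents $M \times P^{2k+2}(\varepsilon_0,\ldots,\varepsilon_0,R)$ as a contact embedding of an open, equidimensional manifold into the closed contact manifold $\partial V$, hence onto an open subset. As $\partial V$ is exact symplectically (Weinstein) fillable it is tight, and since tightness passes to open subsets, the manifold in the statement is tight. Note $k$ and $\varepsilon_0$ were chosen before $R$ and depend only on $(M,\alpha)$, exactly as required.

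The hard part, and the place where care is genuinely needed, is the compatibility of the contact \emph{forms} through the product with $\mathbb{D}^2(R)$. The contact neighborhood theorem a priori identifies only contact structures, so it yields $\Phi^*\big(i_Y\omega|_{\partial W}\big)=g\cdot(\alpha+\sum r_i^2 d\theta_i)$ for some positive function $g$, and the product construction is sensitive to this conformal factor: $g\,\beta + r^2\,d\theta$ need not be contactomorphic to $\beta + r^2\,d\theta$. I would therefore have to insist that $\Phi$ be the genuine strict normal form — available precisely because the normal bundle is trivial and symplectic — so that $g\equiv 1$ and the two forms match on the nose before taking the product. A secondary but routine check is that Proposition \ref{prop:WR2intoWeinstein} indeed applies for every $R>0$ with the same $\partial W$, so that only the last radius varies while $k$ and $\varepsilon_0$ stay fixed.
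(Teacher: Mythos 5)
Your overall strategy --- fix $k$ and $\varepsilon_0$ from Theorem \ref{thm:MD2ktight} independently of $R$, then compose the resulting neighborhood embedding with the embedding of $\partial W \times \mathbb{D}^2(R)$ into a Weinstein boundary from Proposition \ref{prop:WR2intoWeinstein} --- is exactly the paper's. You also correctly isolate the crux: the conformal factor of the neighborhood identification does not commute with taking the product with a disk. But your resolution of that crux is a genuine gap. The contact neighborhood theorem identifies contact \emph{structures}, not contact forms: triviality of the conformal symplectic normal bundle gives you the model $\left(M\times\R^{2k}, \ker\left(\alpha+\sum r_i^2\,d\theta_i\right)\right)$ up to contactomorphism, but it does not let you arrange $\Phi^*\left(i_Y\omega|_{\partial W}\right)=\alpha+\sum r_i^2\,d\theta_i$ on the nose. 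Already along the core the forms disagree: $e_1$ is produced by an $h$--principle, so $e_1^*(i_Y\omega)=e^{h}\alpha$ with $h$ uncontrolled; and a strict identification on a whole neighborhood would in addition have to match Reeb dynamics, which is impossible in general. So the condition $g\equiv 1$ that you need for $\Phi\times\id_{\mathbb{D}^2(R)}$ to be a contact embedding is simply not available, and as you yourself observe, without it the product map does not preserve the contact structure.

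The paper's fix is to keep the conformal factor and absorb it into the disk coordinates. Write $(e')^*(i_Y\omega)=e^{\eta}\left(\alpha+\sum_{i=1}^{k}r_i^2\,d\theta_i\right)$, shrink $\varepsilon_0$ so that $\sup\eta<\infty$, and replace your product map by $(p,x,y,x_{k+1},y_{k+1})\mapsto \varphi\left(e'(p,x,y),\, e^{\eta/2}x_{k+1},\, e^{\eta/2}y_{k+1}\right)$. The fiberwise rescaling by $e^{\eta/2}$ (with $\eta$ depending on the base point) pulls $r_{k+1}^2\,d\theta_{k+1}=\tfrac12(x_{k+1}\,dy_{k+1}-y_{k+1}\,dx_{k+1})$ back to $e^{\eta}\,r_{k+1}^2\,d\theta_{k+1}$, the $d\eta$ cross terms cancelling, so the total pullback is $e^{\eta}\left(\alpha+\sum_{i=1}^{k+1}r_i^2\,d\theta_i\right)$ and the map is a contact embedding after all. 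The only price is that Proposition \ref{prop:WR2intoWeinstein} must be invoked with the larger radius $e^{\sup\eta/2}R$, which is harmless since that proposition holds for every radius with the same $\partial W$. With this correction in place of your ``strict normal form'' step, your argument coincides with the paper's proof.
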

Let us emphasize that $\varepsilon_0$ does not depend on $R$: for any $R>0$, $M \times P^{2k+2}(\varepsilon_0, \ldots, \varepsilon_0, R)$ is tight.

\begin{proof}
The integer $k$ and the number $\varepsilon_0$ both come from Theorem \ref{thm:MD2ktight}.
Denote by $e'$ the contact embedding from $(M \times P^{2k}(\varepsilon_0), \ker(\alpha + \sum _{i = 1}^{k}r_i^2 d\theta_i))$ into $(\partial W, \xi_1 = \ker(i_Y \omega))$ and let $\eta$ be the conformal factor of $e'$, $(e')^* i_Y \omega = {\rm exp}(\eta) \alpha'$.
If necessary, decrease the value of $\varepsilon_0$ to guarantee that ${\rm sup}\,\eta$ is finite.

Proposition \ref{prop:WR2intoWeinstein} supplies a Weinstein manifold $(V = f_q^{-1}(0,1], \omega + dx \wedge dy, f_q, X)$ and  contact embedding
$$\varphi\colon (\partial W \times \mathbb{D}^2({\rm exp}({\rm sup}\, \eta/2) R), \alpha') \hookrightarrow (\partial V, \alpha').$$

Therefore, the map $\widetilde{\varphi} \colon M \times P^{2k+2}(\varepsilon_0, \ldots, \varepsilon_0, R) \to \partial V$ given by
$$
\widetilde{\varphi}(p, x, y, x_{k+1}, y_{k+1}) = \varphi(e'(p,x,y), {\rm exp}(\eta/2) x_{k+1}, {\rm exp}(\eta/2) y_{k+1}))
$$
is a contact embedding. Since $\partial V$ is exact symplectically fillable the conclusion follows.
\end{proof}

We are ready now to prove Theorem \ref{thm:MD2tight}. To ease the notation, we shall understand the contact form is equal to $\alpha + \sum r_i^2 d \theta_i$ in case it is omitted.

Let us proceed by contradiction. Suppose that $M \times \mathbb{D}^2(\varepsilon)$ is overtwisted for $\varepsilon$ smaller than $\varepsilon_0$. Applying Theorem \ref{thm:CMP} $k$ times consecutively we obtain a radius $R_{\varepsilon} > 0$ such that $M \times P^{2k+2}(\varepsilon, R_{\varepsilon}, \ldots, R_{\varepsilon})$ is overtwisted.
As we will show below, this manifold contact embeds into $M \times P^{2k+2}(\varepsilon_0, \ldots, \varepsilon_0, R)$ provided $R$ is large enough. From Corollary \ref{coro:fideoistight} we know that the latter manifold is tight so we reach a contradiction. Therefore, $M \times \mathbb{D}^2(\varepsilon)$ is tight.

The only missing ingredient is the announced contact embedding:
\begin{equation}\label{eq:embedding}
M \times P^{2k+2}(\varepsilon, R_{\varepsilon}, \ldots, R_{\varepsilon}) \to
M \times P^{2k+2}(\varepsilon_0, \ldots, \varepsilon_0, R)
\end{equation}
Its existence, subject to the conditions $\varepsilon < \varepsilon_0$ and $R$ large enough,
 is a consequence of the following packing theorem in symplectic geometry proved by Guth \cite[Theorem 1]{guth}.

\begin{theorem}\label{thm:guth}
For every $m \in \mathbb{N}$ there is a constant $C(m) \ge 1$ such that for any pair of ordered $m$--tuples of positive numbers $R_1 \le \ldots \le R_m$ and $R'_1 \le \ldots \le R'_m$ that satisfy
\begin{itemize}
\item $C(m) R_1 \le R'_1$ and
\item $C(m) R_1 \cdot \ldots \cdot R_k \le R'_1 \cdot \ldots \cdot R'_m$.
\end{itemize}
there is a symplectic embedding
\[
P^{2m}(R_1, \ldots, R_m) \hookrightarrow P^{2m}(R'_1, \ldots, R'_m)
\]
\end{theorem}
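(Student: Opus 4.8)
The plan is to separate the necessary constraints from the flexibility that makes them sufficient, and then to realize the embedding by an explicit symplectic folding construction. First I would record why the two hypotheses are the right ones. A symplectic embedding preserves Liouville volume, and $\mathrm{vol}\,P^{2m}(R_1,\dots,R_m)=\pi^m\prod_i R_i^2$, so the product hypothesis is, up to the constant $C(m)$, a volume inequality and is genuinely necessary. Likewise the Gromov width of $P^{2m}(R_1,\dots,R_m)$ with $R_1\le\dots\le R_m$ equals $\pi R_1^2$: the inclusion $B^{2m}(R_1)\subset P^{2m}(R_1,\dots,R_1)\subset P^{2m}(R_1,\dots,R_m)$ gives the lower bound, while nonsqueezing against the first factor gives the upper bound. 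Since Gromov width is monotone under symplectic embeddings, $R_1\le R_1'$ is necessary without any constant. Hence the entire content of the theorem is the converse flexibility statement: once each inequality is relaxed by the dimensional factor $C(m)$, an embedding must exist.

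The engine for the flexibility direction is a single \emph{folding move}. Given a polydisk factor $\mathbb{D}^2(S)$ that is much fatter than some other factor $\mathbb{D}^2(a)$, I would cut the long cylinder over the $\mathbb{D}^2(S)$-direction into roughly $k$ slabs and, by a compactly supported Hamiltonian isotopy, stack those slabs over the $a$-direction after inflating it. Since $(ka)^2(S/k)^2=a^2S^2$, such a move is volume-preserving in principle and yields a symplectic embedding
\[
P^{2m}(\dots,a,\dots,S,\dots)\hookrightarrow P^{2m}(\dots,Cka,\dots,S/k,\dots)
\]
that wastes only a controlled multiplicative factor of volume. Iterating the move redistributes size among the factors at bounded cost, so any source polydisk obeying the volume bound can be reshaped into one whose factors are all $\le R_i'$; it then embeds into the target by the trivial coordinatewise inclusion, valid exactly when $R_i\le R_i'$ for every $i$. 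The minimal-factor hypothesis is what guarantees there is room for the first, un-foldable direction.

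The constant $C(m)$ is then accumulated from the bounded loss in each folding move together with the number of moves, of order $m$, needed to balance $m$ factors; the essential bookkeeping point is that this product stays independent of the radii themselves and depends only on $m$. The hard part will be the folding move: producing an explicit compactly supported symplectomorphism that performs the cut-and-stack while both remaining inside the smaller target polydisk and losing only a factor bounded uniformly in $S$ and $k$. Symplectic folding inevitably wastes volume near the creases, and controlling that waste by a single dimensional constant is precisely what forces the several spare directions of the high-dimensional setting (the statement is false in dimension four, where capacity-type invariants obstruct) and constitutes the technical core of the argument.
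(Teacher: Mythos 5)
First, a point of comparison: the paper does not prove this statement at all --- it is quoted verbatim as \cite[Theorem 1]{guth} and used as a black box, so there is no internal argument to measure your attempt against. Judged on its own terms, your proposal correctly isolates the two relevant invariants (Gromov width for the first hypothesis, volume for the second) and correctly identifies symplectic folding as the engine of Guth's flexibility result. But what you have written is a statement of strategy, not a proof: you explicitly defer ``producing an explicit compactly supported symplectomorphism that performs the cut-and-stack while \ldots losing only a factor bounded uniformly in $S$ and $k$'' to a later stage, and that deferred step \emph{is} the theorem. Moreover, the bookkeeping claim that $O(m)$ such moves suffice is asserted rather than argued, and it is the crux: if each elementary move only trades area between two factors, balancing $m$ factors whose radii have unbounded ratios would naively require a number of folds depending on the ratios $R_i/R_1$ rather than on $m$ alone, and the accumulated constant would then fail to depend only on the dimension. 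Guth's actual construction circumvents exactly this by a single higher-dimensional wrapping map whose loss is dimensional; reproducing that construction and its error control is the entire content, and it is absent here.

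Second, your parenthetical claim that ``the statement is false in dimension four'' contradicts the very statement you are proving, which is asserted for every $m \in \mathbb{N}$ and is potentially applied in this paper with $m = k+1 = 2$ (when $\dim M = 3$). The four-dimensional case is true and is part of Guth's Theorem 1; it already follows from multiple symplectic folding (Lalonde--McDuff, Traynor, Schlenk), which embeds $P^{4}(1,S)$ into roughly $P^{4}(A, 2S/A)$ for $2 \le A \le \sqrt{2S}$ and hence yields a valid constant $C(2)$. Capacity-type invariants do constrain four-dimensional polydisk embeddings, but not beyond the width and volume hypotheses once these are relaxed by a bounded factor. So your heuristic for why spare directions are indispensable is incorrect as stated, and any completed version of your argument would still have to cover $m = 2$, which your outline explicitly excludes.
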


The symplectic embedding supplied by Theorem \ref{thm:guth} is automatically extended to our desired contact embedding (\ref{eq:embedding}) thanks to the following lemma:

\begin{lemma}
Let $\Psi \colon (D_1, d\lambda_1) \to (D_2, d\lambda_2)$ be an exact symplectic embedding. For any contact manifold $(M, \ker \,\alpha)$ with a choice of contact form $\alpha$ that makes the associated Reeb flow complete, $\Psi$ induces a (strict) contact embedding
$$(M \times D_1, \alpha + \lambda_1) \rightarrow (M \times D_2, \alpha + \lambda_2).$$
\end{lemma}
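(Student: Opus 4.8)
The plan is to begin with the obvious candidate, the product map $F_0 = \id \times \Psi \colon M \times D_1 \to M \times D_2$, measure by how much it fails to be strict, and then correct it with a shift along the Reeb flow of $M$. Since $\Psi$ is an \emph{exact} symplectic embedding, $\Psi^*\lambda_2 - \lambda_1$ is not merely closed but exact; write $\Psi^*\lambda_2 = \lambda_1 + dh$ for a smooth function $h \colon D_1 \to \R$. Pulling back the product contact form then gives
$$F_0^*(\alpha + \lambda_2) = \alpha + \Psi^*\lambda_2 = \alpha + \lambda_1 + dh,$$
so $F_0$ is already a contact embedding, but it overshoots the target form $\alpha + \lambda_1$ by the exact term $dh$, which involves only the $D_1$ directions. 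The whole point is to absorb this term.

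Next I would introduce the correction. Let $R_\alpha$ denote the Reeb vector field of $\alpha$ and $\phi_t$ its flow, which is complete by hypothesis. Define $\Phi \colon M \times D_1 \to M \times D_1$ by $\Phi(p, z) = (\phi_{-h(z)}(p), z)$; completeness is exactly what guarantees that this is a well-defined diffeomorphism (with inverse $(p,z) \mapsto (\phi_{h(z)}(p), z)$) even when $M$ is noncompact and $h$ is unbounded. The key computation is the pullback $\Phi^*\alpha$. Because $i_{R_\alpha}d\alpha = 0$ and $\alpha(R_\alpha) = 1$ we have $\mathcal{L}_{R_\alpha}\alpha = 0$, hence $\phi_t^*\alpha = \alpha$ for every $t$; differentiating $\Phi$ in the $D_1$ directions then contributes the extra term $\alpha(R_\alpha)\, d(-h) = -dh$. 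Thus $\Phi^*\alpha = \alpha - dh$, while $\Phi^*\lambda_1 = \lambda_1$ since $\Phi$ fixes the $D_1$ factor, giving $\Phi^*(\alpha + \lambda_1) = \alpha + \lambda_1 - dh$.

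Finally, set $F = F_0 \circ \Phi$. Since $h$ depends only on $z$ and $\Phi$ preserves $z$, one has $\Phi^*(dh) = dh$, so
$$F^*(\alpha + \lambda_2) = \Phi^*\bigl(\alpha + \lambda_1 + dh\bigr) = (\alpha + \lambda_1 - dh) + dh = \alpha + \lambda_1,$$
and $F$, a composition of two embeddings, is the desired strict contact embedding. I expect the only genuine subtlety to be the computation of $\Phi^*\alpha$: one must track carefully how the $z$--dependent Reeb shift feeds a $-dh$ term through the normalization $\alpha(R_\alpha) = 1$, and it is precisely here that both the invariance $\phi_t^*\alpha = \alpha$ and the completeness of the Reeb flow (needed to make $\Phi$ global) are used. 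Everything else is bookkeeping.
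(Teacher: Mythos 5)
Your proof is correct and is essentially identical to the paper's: the paper defines in one step the map $\varphi(p,x) = (\Phi_{-H(x)}(p), \Psi(x))$, which is exactly your composition $F = F_0 \circ \Phi$, and your pullback computation just spells out the verification the paper leaves implicit.
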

\begin{proof}
Since $\Psi$ is exact, there exists a smooth function $H \colon D_1 \to \R$ such that $dH = \Psi^*\lambda_2 - \lambda_1$. If we denote the Reeb flow in $M$ by $\Phi$,
$$
\varphi \colon (M \times D_1, \alpha + \lambda_1) \to (M \times D_2, \alpha + \lambda_2), \enskip \enskip \varphi(p, x) = (\Phi_{-H(x)}(p), \Psi(x))
$$
is a contact embedding.
\end{proof}


\section{Extension to contact submanifolds}

The results from the previous sections can be extended to a more general setting: contact submanifolds with arbitrary normal bundle. In the presence of a nowhere vanishing section of the normal bundle we will prove that the contact submanifold has a tight neighborhood. This is the content of Theorem \ref{thm:nbdsubmanifoldtight}.

Let $\pi \colon E \to M$ be a complex vector bundle over a contact manifold equipped with an hermitian metric and a unitary connection $\nabla$. The associated vertical bundle is denoted by $\mathcal{V} = \ker(d\pi)$.
The standard Liouville form in $\R^{2n}$ is $U(n)$--invariant and induces a global 1--form in $\mathcal{V}$ that will be denoted $\widetilde{\lambda}$. This real 1--form can be extended to $TE$ by the expression $\lambda = \widetilde{\lambda} \circ \pi_{\mathcal{V}}$ after we choose a projection onto the vertical direction $\pi_{\mathcal{V}} \colon TE \to \mathcal{V}$. The map $\pi_V$ is determined by the choice of unitary connection so it is not canonical. The 1--form in $TE$ associated to the connection $\nabla$ is $\widetilde{\alpha} = \pi^*\alpha + \lambda$.

Even though $\widetilde{\alpha}$ can be seen as the lift of the contact form $\alpha$ to $E$, it is not a globally defined contact form in general. However, it defines a contact form around the zero section $E_0$ of the vector bundle.

\begin{lemma}\label{lem:aroundzeroiscontact}
$\widetilde{\alpha}$ is a contact form in a neighborhood of $E_0$. The restriction $(E_0, \ker (\widetilde{\alpha}|_{E_0}))$ is contactomorphic to $(M, \xi = \ker\, \alpha)$. Moreover,
given any other contact structure $\ker \, \beta$ that coincides with $\ker (\widetilde{\alpha})$ in $E_0$ and with the same complex structure in the normal bundle, there exist neighborhoods $U, V$ of $E_0$ such that $(U, \ker(\beta|_U))$ and $(V, \ker(\widetilde{\alpha}|_V))$ are contactomorphic.

\end{lemma}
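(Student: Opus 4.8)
The plan is to handle the three assertions in turn: verify the contact condition pointwise along $E_0$, identify the restriction to the zero section by a direct computation, and deduce the neighborhood uniqueness from the standard contact neighborhood theorem after matching the symplectic normal data. Write $\dim M = 2n+1$ and let $N$ be the complex rank of $E$, so $\dim E = 2n+1+2N$ and the contact condition to check is $\widetilde{\alpha}\wedge(d\widetilde{\alpha})^{n+N}\neq 0$ along $E_0$. First I would work in a local unitary trivialization $E\cong M\times\C^N$ in which $\pi_{\mathcal{V}}$ is the covariant derivative of $\nabla$, so that $\lambda=\tfrac12\sum_j(x_j\,Dy_j-y_j\,Dx_j)$ with $Dx_j,Dy_j$ the connection-adjusted differentials. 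Since $\widetilde{\lambda}$ is linear in the fibre coordinates, $\lambda$ vanishes along $E_0$, and a short computation shows that at a point of $E_0$ one has $d\lambda=\omega_{\mathrm{std}}$, the standard fibre symplectic form, with no horizontal or mixed components (the connection terms are of order $|z|$ and drop out at $z=0$). Hence along $E_0$, $d\widetilde{\alpha}=\pi^*d\alpha+\omega_{\mathrm{std}}$ is a sum of forms supported on the complementary horizontal and vertical subspaces, and $\widetilde{\alpha}=\alpha$ there. Expanding $(d\widetilde{\alpha})^{n+N}$ and using that $(\pi^*d\alpha)^k\neq 0$ forces $k\le n$ while $\omega_{\mathrm{std}}^{\,n+N-k}\neq0$ forces $k\ge n$, only the $k=n$ term survives, giving $\widetilde{\alpha}\wedge(d\widetilde{\alpha})^{n+N}=\binom{n+N}{n}\,\alpha\wedge(d\alpha)^n\wedge\omega_{\mathrm{std}}^N\neq0$. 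By openness of the contact condition, $\widetilde{\alpha}$ is contact on a neighborhood of $E_0$.

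For the restriction to the zero section, let $\iota\colon E_0\hookrightarrow E$ denote the inclusion. Since $\pi\circ\iota=\id_M$ and $\lambda$ vanishes on $E_0$, we get $\iota^*\widetilde{\alpha}=\iota^*\pi^*\alpha+\iota^*\lambda=\alpha$, so $\ker(\widetilde{\alpha}|_{E_0})=\xi$ and the zero section itself is the desired strict contactomorphism between $(E_0,\ker(\widetilde{\alpha}|_{E_0}))$ and $(M,\xi)$.

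For the uniqueness statement, the plan is to view the closed contact submanifold $E_0\cong M$ inside both $(E,\ker\widetilde{\alpha})$ and $(E,\ker\beta)$ and invoke the standard contact neighborhood theorem, which asserts that a neighborhood of a closed contact submanifold is determined up to contactomorphism by the induced contact structure on the submanifold together with the conformal symplectic (equivalently, complex up to homotopy) structure on its symplectic normal bundle. From the first computation, $\ker\widetilde{\alpha}$ meets $TE_0$ in $\xi$, and the symplectic normal bundle of $E_0$, i.e.\ the $d\widetilde{\alpha}$-orthogonal of $\xi$ inside $\ker\widetilde{\alpha}|_{E_0}=\xi\oplus\mathcal{V}$, is exactly $(\mathcal{V}|_{E_0},\omega_{\mathrm{std}})\cong(E,J)$ with the given complex structure. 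By hypothesis $\ker\beta$ induces the same $\xi$ on $E_0$ and carries the same complex structure on the normal bundle, so the two sets of normal data agree over $\id_{E_0}$. The neighborhood theorem then produces neighborhoods $U,V$ of $E_0$ and a contactomorphism $(U,\ker(\beta|_U))\cong(V,\ker(\widetilde{\alpha}|_V))$ restricting to the identity on $E_0$.

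The main obstacle is this third step: one must check carefully that the matched datum, the complex structure on the normal bundle, really coincides with the conformal symplectic structure on the symplectic normal bundle of $E_0$ as a contact submanifold, and that matching both this bundle and the contact structure on $E_0$ is enough to conclude. If one prefers to avoid citing the theorem, the same conclusion follows from its underlying mechanism: first use the normal-bundle isomorphism to build a diffeomorphism fixing $E_0$ that matches $\widetilde{\alpha}$ and $\beta$ together with their differentials to first order along the compact $E_0$, and then run a \emph{Gray/Moser stability} argument, interpolating the two forms and integrating the resulting vector field (which vanishes on $E_0$) over a fixed neighborhood.
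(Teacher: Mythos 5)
Your proof is correct. Note that the paper itself gives no proof of this lemma at all --- it is stated as a standard fact and the text moves directly on to the splitting $E = F \oplus L$ --- so there is no ``paper's argument'' to compare against; what you have written is precisely the expected standard argument filled in. Your local computation is sound: in a unitary trivialization the connection terms in $\lambda$ are quadratic in the fibre coordinates, so they contribute nothing to $\lambda$ or $d\lambda$ along $E_0$, and the binomial expansion of $(\pi^*d\alpha + \omega_{\mathrm{std}})^{n+N}$ indeed collapses to the single $k=n$ term, giving the contact condition along $E_0$ and hence, by openness, on a neighborhood. The identification $\iota^*\widetilde{\alpha} = \alpha$ is immediate. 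For the uniqueness statement, reducing to the contact neighborhood theorem by checking that the conformal symplectic normal bundle of $E_0$ is $(\mathcal{V}|_{E_0}, \omega_{\mathrm{std}})$ and that a common compatible complex structure forces the two symplectic normal bundles to be isomorphic (the space of symplectic forms compatible with a fixed complex structure on a bundle is convex) is exactly the right mechanism, and your fallback Gray--Moser sketch is the proof of that theorem anyway. No gaps.
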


Suppose henceforth that $\pi$ has a global nowhere vanishing section $s \colon M \to E$. The section $s$ creates a complex line subbundle $\pi|_L \colon L \to M$.
Then, the bundle $E$ splits as $E = F \oplus L$ and $L$ is trivial, i.e. there is an isomorphism $\phi \colon L \to \underline{\mathbb{C}}$ that sends $s(p)$ to $1_p \in \mathbb{C}$ in the fiber above every point $p \in M$.

A suitable choice of unitary connection on $\pi \colon E \to M$ ensures that the associated contact form can be written as $\widetilde{\alpha} = \alpha' + \lambda$, where $\alpha'$ is a contact form in $F$ and $\lambda$ is the radial Liouville form in $\R^2$.

\begin{proposition}\label{prop:UD2}
There exists $U$, a neighborhood of the zero section $F_0$ of $F$, and $\varepsilon > 0$ such that
$(U \times \mathbb D^2(\varepsilon), \ker(\alpha' + \lambda))$ is tight.
\end{proposition}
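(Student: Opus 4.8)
The plan is to argue by contradiction and to reduce the statement to the fillability results of the two previous sections. First I would reinterpret the object geometrically: since $L$ is trivial, $U\times\mathbb D^2(\varepsilon)$ is nothing but a neighborhood of the zero section $M$ of the complex bundle $E=F\oplus L$, equipped with the canonical contact germ $\widetilde\alpha=\alpha'+\lambda$ of Lemma \ref{lem:aroundzeroiscontact}, the $\mathbb D^2(\varepsilon)$ factor being precisely the trivial $L$--direction. Supposing the proposition fails, for every neighborhood $U$ of $F_0$ and every $\varepsilon>0$ the manifold $(U\times\mathbb D^2(\varepsilon),\ker\widetilde\alpha)$ is overtwisted. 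The subtlety to keep in mind is that tightness passes to open subsets but \emph{not} to contact submanifolds, so I cannot merely stabilize the normal bundle and quote Theorem \ref{thm:MD2ktight}: the final embedding into a tight manifold must have codimension zero.

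Second, I would inflate the trivial directions. Applying Theorem \ref{thm:CMP} to the overtwisted $U\times\mathbb D^2(\varepsilon)$ and iterating $j$ times produces, for radii $R$ as large as desired, an overtwisted manifold $U\times\mathbb D^2(\varepsilon)\times P^{2j}(R,\dots,R)$. This is again a neighborhood of the zero section, now of $E\oplus\underline{\mathbb C}^{j}=F\oplus\underline{\mathbb C}^{j+1}$, with contact form $\alpha'+\sum r_i^2\,d\theta_i$ in which the twisting is confined to the $F$--summand while the remaining $j+1$ directions are honestly trivial and of large radius. I would keep $j$ large, to be fixed in the next step.

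Third, I would manufacture a tight model sharing \emph{exactly} this normal bundle. Re-running the construction of Section \ref{sec:MD2k} with the twisted bundle $N:=F\oplus\underline{\mathbb C}^{j+1}$ in place of the trivial $\underline{\mathbb C}^k$, one chooses $\tau'$ with $\xi^*\oplus\tau'\cong N$, which exists as a genuine complex bundle once $j$ is large enough to sit in the stable range, sets $\widehat W=\pi^*\tau'$ so that $\widehat W\to M$ is $N\oplus\langle\alpha\rangle$, and invokes Eliashberg's Theorem \ref{thm:almostcomplex} together with the $h$--principle Theorem \ref{thm:hprinciple} verbatim. This yields a closed Stein fillable $\partial W$ in which $M$ embeds with normal bundle $N$; a neighborhood of $M$ in $\partial W$ is then contactomorphic, by Lemma \ref{lem:aroundzeroiscontact}, to a neighborhood of the zero section of $F\oplus\underline{\mathbb C}^{j+1}$, and being open in the fillable $\partial W$ it is tight. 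Using the Weinstein inflation of Section 3 (Proposition \ref{prop:WR2intoWeinstein} and the argument of Corollary \ref{coro:fideoistight}) on the trivial $\underline{\mathbb C}^{j+1}$ directions, I would enlarge this tight model so that its polydisk radii in those directions are as large as I wish, keeping a fixed radius $\varepsilon_0$ in the $F$--directions.

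Finally I would pack the overtwisted source into the tight model. Choosing $\varepsilon<\varepsilon_0$ handles the common $F$--summand by inclusion, while Guth's Theorem \ref{thm:guth} provides a symplectic embedding of the fiber polydisk $P^{2(j+1)}(\varepsilon,R,\dots,R)$ into $P^{2(j+1)}(R',\dots,R')$ as soon as $R'$ is large enough; since $\underline{\mathbb C}^{j+1}$ is trivial this packing extends over $M$ to a bundle map, which the final lemma of Section 3 upgrades to a contact embedding. The outcome is a codimension-zero contact embedding of the overtwisted $F\oplus\underline{\mathbb C}^{j+1}$--neighborhood into the tight one, contradicting that open subsets of tight manifolds are tight; hence some $U\times\mathbb D^2(\varepsilon)$ is tight. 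I expect the main obstacle to be precisely the matching of twisted normal bundles that forces the last embedding to be codimension zero: the twisted summand $F$ cannot be removed by stabilization, so it must be transported identically through both the overtwisted source and the Stein/Weinstein model, and the stable-range and large-radius bookkeeping needed to arrange this is the delicate part of the argument.
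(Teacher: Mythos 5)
Your argument is essentially correct, but it takes a genuinely different and considerably heavier route than the paper. The paper's proof is a short reduction to the closed case: it includes $F$ into the closed manifold $Q=\mathbb{P}(F\oplus\mathbb{C})$, which carries a formal contact structure that is genuine near $F_0$ by Lemma \ref{lem:aroundzeroiscontact}, invokes the relative form of the existence $h$--principle of \cite[Theorem 1.1]{BEM} to produce a contact structure $\widetilde{\xi}'$ on all of $Q$ agreeing with $\ker\alpha'$ on $U$, and then simply applies the already--established Theorem \ref{thm:MD2tight} to $(Q,\widetilde{\xi}')$; the open subset $U\times\mathbb D^2(\varepsilon)$ of the tight $Q\times\mathbb D^2(\varepsilon)$ is then tight. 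You instead re-run the whole machinery of Sections \ref{sec:MD2k} and 3 with the twisted bundle $F\oplus\underline{\C}^{j+1}$ in place of $\underline{\C}^{k}$. This does work: the Stein construction, Theorem \ref{thm:hprinciple} and Lemma \ref{lem:normaltrivial} never use triviality of $\xi^*\oplus\tau$, only that the normal bundle of $e_1$ is the prescribed complex bundle, and your stable-range choice of $\tau'$ with $\xi^*\oplus\tau'\cong F\oplus\underline{\C}^{j+1}$ is the correct way to prescribe it; your opening observation that the final embedding must be codimension zero is exactly the point that forces all of this. The paper's route buys brevity and avoids redoing the Weinstein/Guth analysis; yours makes the normal-bundle matching explicit and does not need the relative BEM theorem on a compactification.

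Two points need repair, though neither is fatal. First, the inflation of Proposition \ref{prop:WR2intoWeinstein} and Corollary \ref{coro:fideoistight} does not enlarge the radii of the \emph{existing} trivial normal directions inside $\partial W$; it adjoins \emph{new} external $\R^2$ factors of large radius while the old normal directions remain at $\varepsilon_0$. Your tight target is therefore of the form $(\text{disk bundle of }F)\times P^{2a}(\varepsilon_0,\dots,\varepsilon_0)\times P^{2b}(R',\dots,R')$, and to keep the fiber polydisks equidimensional you must apply Theorem \ref{thm:CMP} to the source $a+b-1$ times rather than $j$ times; the Guth conditions $C\varepsilon\le\varepsilon_0$ and $C\varepsilon R^{\,a+b-1}\le\varepsilon_0^{\,a}(R')^{\,b}$ still hold for $R'$ large, so this is bookkeeping. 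Second, the last lemma of Section 3 requires a complete Reeb flow on the base, which the open manifold $U$ need not have; this is handled by shrinking $U$, or avoided entirely by compactifying first as the paper does.
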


Note that this statement is exactly Theorem \ref{thm:MD2tight} except from the fact that $F$ is not closed. The proof of Proposition \ref{prop:UD2} follows by embedding $(U, \ker\, \alpha')$ in a closed contact manifold $(\widetilde{F}, \ker\,\widetilde{\alpha}')$ 
 and then applying Theorem \ref{thm:MD2tight} to this manifold to deduce that $(\widetilde{F} \times \mathbb D^2(\varepsilon), \ker(\widetilde{\alpha}' + \lambda))$ is tight if $\varepsilon > 0$ is small. This result evidently implies that $(U \times \mathbb D^2(\varepsilon), \ker(\alpha' + \lambda))$ is also tight.

The aforementioned embedding is defined by the natural inclusion of $F$ in the projectivization of $F \oplus \mathbb{C}$:
$$
F \hookrightarrow Q = \mathbb{P}(F \oplus \mathbb{C})
$$
The complex bundle $\pi_Q\colon Q \to M$ carries a natural formal contact structure 
$\xi' = (d\pi_Q)^{-1}(\xi)$
Indeed, an almost complex structure in $\xi'$ is obtained as the sum of the pullback of a complex structure in $\xi$ compatible with $d\alpha$ and a complex structure on the fibers of $\pi_Q$. This formal contact structure is genuine (i.e., it is a true contact structure) in a neighborhood $U$ of $F_0$ by Lemma \ref{lem:aroundzeroiscontact}. The $h$--principle for closed manifolds proved in \cite[Theorem 1.1]{BEM} provides a homotopy from any formal contact structure to a contact structure. Furthermore, the homotopy can be made relative to a closed set in which the formal contact structure is already genuine. Applying this theorem we obtain a contact structure $\widetilde{\xi}'$ on $Q$ that agrees with $\ker\, \alpha'$ in $U$.

We can reformulate Proposition \ref{prop:UD2} in the following way:

\begin{theorem}\label{thm:vectorbundlewithsection}
Let $\pi \colon E \to M$ be a complex vector bundle over a closed contact manifold $(M, \xi)$. Suppose that $\pi$ has a global nowhere vanishing section. Then, there exists a neighborhood $U$ of the zero section of the bundle such that $(U, \widetilde \xi)$ is tight for any contact structure $\widetilde{\xi}$ extending $\xi$ and preserving the complex structure of $E$.
\end{theorem}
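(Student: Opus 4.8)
The plan is to read this theorem as a formal consequence of Proposition~\ref{prop:UD2} together with the uniqueness clause of the neighborhood theorem Lemma~\ref{lem:aroundzeroiscontact}; as the text already announces, it is a reformulation, and all the genuine geometric content has been deposited in those two results (and, through Proposition~\ref{prop:UD2}, in Theorem~\ref{thm:MD2tight}).

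First I would fix the data attached to the nowhere vanishing section $s$: the splitting $E = F \oplus L$ with $L \cong \underline{\C}$ trivialized so that $s(p) \mapsto 1_p$, and the adapted unitary connection for which the model contact form near the zero section is $\widetilde{\alpha} = \alpha' + \lambda$, with $\alpha'$ the lift of $\alpha$ to $F$ and $\lambda$ the radial Liouville form on the $\C$--factor. Under this splitting a small neighborhood of the zero section $E_0$ is exactly of the form $U_F \times \mathbb{D}^2(\varepsilon)$, where $U_F$ is a neighborhood of $F_0$ in $F$ and $\mathbb{D}^2(\varepsilon)$ is a disk in the $L$--direction. Proposition~\ref{prop:UD2} then supplies $U_F$ and $\varepsilon>0$ for which $(U_F \times \mathbb{D}^2(\varepsilon), \ker(\alpha'+\lambda))$ is tight; transporting this along the splitting exhibits a neighborhood of $E_0$ carrying the model structure $\ker\widetilde{\alpha}$ as a tight manifold.

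The step where the hypotheses on $\widetilde{\xi}$ enter is the transfer from the model to an arbitrary $\widetilde{\xi}$. The assumption that $\widetilde{\xi}$ extends $\xi$ gives $\widetilde{\xi}|_{E_0} = \ker(\widetilde{\alpha}|_{E_0})$ under the identification $E_0 = M$, and the condition that $\widetilde{\xi}$ preserves the complex structure of $E$ forces it to induce on the normal bundle of $E_0$ the same complex structure as the model. These are precisely the inputs of the uniqueness clause of Lemma~\ref{lem:aroundzeroiscontact}, so for each such $\widetilde{\xi}$ there are neighborhoods $U,V$ of $E_0$ with $(U, \widetilde{\xi}|_U)$ contactomorphic to $(V, \ker(\widetilde{\alpha}|_V))$. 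Shrinking if necessary so that $V$ sits inside the tight model neighborhood of the previous paragraph, $V$ is tight as an open subset of a tight manifold, and since tightness is a contact invariant $(U, \widetilde{\xi})$ is tight as well, which is the claim (with $U$ allowed to depend on $\widetilde{\xi}$).

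I expect the only point requiring care to be the bookkeeping of this last step: matching ``extends $\xi$'' and ``preserves the complex structure of $E$'' exactly with the hypotheses of Lemma~\ref{lem:aroundzeroiscontact}, and checking that the contactomorphic copy $V$ can indeed be arranged inside the tight neighborhood produced by Proposition~\ref{prop:UD2}. There is no new analytic difficulty here; the theorem simply repackages Proposition~\ref{prop:UD2} so as to isolate the dependence of the tight neighborhood on the chosen extension $\widetilde{\xi}$, and its proof is the concatenation of the two cited results.
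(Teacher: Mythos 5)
Your proposal matches the paper's intended argument exactly: the paper states Theorem \ref{thm:vectorbundlewithsection} as a ``reformulation'' of Proposition \ref{prop:UD2}, and the transfer to an arbitrary extension $\widetilde{\xi}$ is precisely the uniqueness clause of Lemma \ref{lem:aroundzeroiscontact} applied as you describe. Your write-up is, if anything, more explicit than the paper's, which leaves this concatenation implicit.
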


An immediate application of Theorem \ref{thm:vectorbundlewithsection} to the case in which $M$ is a contact submanifold and $\pi$ is its normal bundle yields Theorem \ref{thm:nbdsubmanifoldtight}.


\end{document}